\def\@tocline#1#2#3#4#5#6#7{\relax
  \ifnum #1>\c@tocdepth 
  \else
    \par \addpenalty\@secpenalty\addvspace{#2}%
    \begingroup \hyphenpenalty\@M
    \@ifempty{#4}{%
      \@tempdima\csname r@tocindent\number#1\endcsname\relax
    }{%
      \@tempdima#4\relax
    }%
    \parindent\z@ \leftskip#3\relax \advance\leftskip\@tempdima\relax
    \rightskip\@pnumwidth plus4em \parfillskip-\@pnumwidth
    #5\leavevmode\hskip-\@tempdima
      \ifcase #1
      \or\or \hskip 2em \or \hskip 2em \else \hskip 3em \fi%
      #6\nobreak\relax
    \dotfill\hbox to\@pnumwidth{\@tocpagenum{#7}}\par
    \nobreak
    \endgroup
  \fi}
\newcommand{\A}{\mathbf{A}}
\newcommand{\G}{\mathbf{G}}
\renewcommand{\P}{\mathbf{P}}
\newcommand{\Z}{\mathbb{Z}}
\newcommand{\F}{\mathbb{F}}
\newcommand{\sP}{\mathcal{P}}
\newcommand{\Cor}{\operatorname{\mathbf{Cor}}}
\newcommand{\Map}{\operatorname{map}}
\newcommand{\uHom}{\operatorname{\underline{Hom}}}
\newcommand{\Spec}{\operatorname{Spec}}
\newcommand{\pro}[1]{\text{\rm pro}_{#1}\text{\rm--}}
\newcommand{\tr}{{\operatorname{tr}}}
\newcommand{\eff}{{\operatorname{eff}}}
\newcommand{\Nis}{{\operatorname{Nis}}}
\newcommand{\et}{{\operatorname{\acute{e}t}}}
\newcommand{\Sym}{{\operatorname{Sym}}}
\renewcommand{\lim}{\operatornamewithlimits{\varprojlim}}
\newcommand{\colim}{\operatornamewithlimits{\varinjlim}}
\newcommand{\ol}{\overline}
\renewcommand{\phi}{\varphi}
\renewcommand{\epsilon}{\varepsilon}
\newcommand{\CI}{\operatorname{\mathbf{CI}}}
\newcommand{\bcube}{{\ol{\square}}}
\newcommand{\M}{\mathbf{M}}
\newcounter{spec}
{\end{list}}%
\newtheorem{lemma}{Lemma}[section]
\newtheorem{thm}[lemma]{Theorem}
\newtheorem{prop}[lemma]{Proposition}
\newtheorem{cor}[lemma]{Corollary}
\theoremstyle{definition}
\theoremstyle{remark}
\newtheorem{rmk}[lemma]{Remark}
\newtheorem{example}[lemma]{Example}
\newtheorem{nota}[lemma]{Notation}
\numberwithin{equation}{section}
\numberwithin{equation}{lemma}
\colorlet{LightRubineRed}{RubineRed!70!}
\DeclareSymbolFontAlphabet{\mathbb}{AMSb} 
\DeclareSymbolFontAlphabet{\mathbbl}{bbold} 
\DeclareSymbolFontAlphabet{\mathbbl}{bbold}
\def\lSm{\mathbf{lSm}}
\def\SmlSm{\mathbf{SmlSm}}
\def\Sm{\mathbf{Sm}}
\newcounter{elno}
\begin{document}

\def\THH{\operatorname{THH}}
\def\TC{\operatorname{TC}}
\def\TCmin{\operatorname{TC}^-}
\def\TP{\operatorname{TP}}
\def\HH{\operatorname{HH}}
\def\HC{\operatorname{HC}}
\def\HCmin{\operatorname{HC}^-}
\def\HP{\operatorname{HP}}

\def\Fil{\operatorname{Fil}}
\def\Gr{\operatorname{Gr}}
\def\gr{\operatorname{gr}}

\def\QSyn{\operatorname{QSyn}}
\def\QRSPerfd{\operatorname{QRSPerfd}}
\def\lQSyn{\operatorname{lQSyn}}
\def\lQRSPerfd{\operatorname{lQRSPerfd}}

\def\syn{\mathrm{syn}}
\def\Fsyn{\mathrm{Fsyn}}
\def\Fet{\mathrm{F\acute{e}t}}

\def\LogRec{\operatorname{\mathbf{LogRec}}}
\def\Ch{\operatorname{\mathrm{Ch}}}

\def\ltr{\mathrm{ltr}}

\def\kX{\mathfrak{X}}
\def\kY{\mathfrak{Y}}

\def\otCIsp{\otimes_{\CI}^{sp}}
\def\otCINissp{\otimes_{\CI}^{\Nis,sp}}

\def\tL{\tilde{L}}
\def\tX{\tilde{X}}
\def\tY{\tilde{Y}}
\def\tF{\widetilde{F}}
\def\tG{\widetilde{G}}

\def\Sh{\operatorname{\mathbf{Shv}}}
\def\PSh{\operatorname{\mathbf{PSh}}}
\def\Shltr{\operatorname{\mathbf{Shv}_{dNis}^{ltr}}}
\def\Shlog{\operatorname{\mathbf{Shv}_{dNis}^{log}}}
\def\Shvlog{\operatorname{\mathbf{Shv}^{log}}}
\def\Sm{\operatorname{\mathrm{Sm}}}
\def\SmlSm{\operatorname{\mathrm{SmlSm}}}
\def\lSm{\operatorname{\mathrm{lSm}}}
\def\FlSm{\operatorname{\mathrm{FlSm}}}
\def\FlQSm{\operatorname{\mathrm{FlQSm}}}
\def\FlQSyn{\operatorname{\mathrm{FlQSyn}}}
\def\lCor{\operatorname{\mathrm{lCor}}}
\def\SmlCor{\operatorname{\mathrm{SmlCor}}}
\def\PShltr{\operatorname{\mathbf{PSh}^{ltr}}}
\def\PShlog{\operatorname{\mathbf{PSh}^{log}}}
\def\logCI{\mathbf{logCI}} 

\def\Mod{\operatorname{Mod}}
\newcommand{\DM}[1][]{\operatorname{\mathcal{DM}}_{#1}}
\newcommand{\DMeff}[1][]{\operatorname{\mathcal{DM}}^{\eff}_{#1}}
\newcommand{\DA}[1][]{\operatorname{\mathcal{DA}}_{#1}}
\newcommand{\DAeff}[1][]{\operatorname{\mathcal{DA}}^{\eff}_{#1}}
\newcommand{\FDA}[1][]{\operatorname{\mathcal{FDA}_{#1}}}
\newcommand{\FDAeff}[1][]{\operatorname{\mathcal{FDA}^{\eff}_{#1}}}
\newcommand{\SH}[1][]{\operatorname{\mathcal{SH}_{#1}}}
\newcommand{\logSH}[1][]{\operatorname{\mathbf{log}\mathcal{SH}_{#1}}}
\newcommand{\logDA}[1][]{\operatorname{\mathbf{log}\mathcal{DA}_{#1}}}
\newcommand{\logDAeff}[1][]{\operatorname{\mathbf{log}\mathcal{DA}^{\eff}_{#1}}}
\newcommand{\logDM}[1][]{\operatorname{\mathbf{log}\mathcal{DM}_{#1}}}
\newcommand{\logDMeff}[1][]{\operatorname{\mathbf{log}\mathcal{DM}^{\eff}_{#1}}}
\newcommand{\logFDA}[1][]{\operatorname{\mathbf{log}\mathcal{FDA}_{#1}}}
\newcommand{\logFDAeff}[1][]{\operatorname{\mathbf{log}\mathcal{FDA}^{\eff}_{#1}}}
\newcommand{\WOmega}{\operatorname{\mathcal{W}\Omega}}
\def\Log{\operatorname{\mathcal{L}\textit{og}}}
\def\Rsc{\operatorname{\mathcal{R}\textit{sc}}}
\def\Pro{\mathrm{Pro}\textrm{-}}
\def\pro{\mathrm{pro}\textrm{-}}
\def\dg{\mathrm{dg}}
\def\plim{\mathrm{``lim"}}
\def\ker{\mathrm{ker}}
\def\coker{\mathrm{coker}}
\def\PrL{\mathcal{P}\mathrm{r^L}}
\def\PrLo{\mathcal{P}\mathrm{r^{L,\otimes}}}
\def\Spt{\mathcal{S}\mathrm{pt}}
\def\PSpt{\mathrm{Pre}\mathcal{S}\mathrm{pt}}
\def\Fun{\mathrm{Fun}}
\def\Sym{\mathrm{Sym}}
\def\CAlg{\mathrm{CAlg}}
\def\Poly{\mathrm{Poly}}
\def\Cat{\mathrm{Cat}}

\def\Alb{\operatorname{Alb}}
\def\bAlb{\mathbf{Alb}}
\def\Gal{\operatorname{Gal}}

\def\hofib{\mathrm{hofib}}
\def\fib{\mathrm{fib}}
\def\triv{\mathrm{triv}}
\def\ABl{\mathcal{A}\textit{Bl}}
\def\divsm#1{{#1_\mathrm{div}^{\mathrm{Sm}}}}

\def\cA{\mathcal{A}}
\def\cB{\mathcal{B}}
\def\cC{\mathcal{C}}
\def\cD{\mathcal{D}}
\def\cE{\mathcal{E}}
\def\cF{\mathcal{F}}
\def\cG{\mathcal{G}}
\def\cH{\mathcal{H}}
\def\cI{\mathcal{I}}
\def\cS{\mathcal{S}}
\def\cM{\mathcal{M}}
\def\cO{\mathcal{O}}
\def\cP{\mathcal{P}}

\def\tcA{\widetilde{\mathcal{A}}}
\def\tcB{\widetilde{\mathcal{B}}}
\def\tcC{\widetilde{\mathcal{C}}}
\def\tcD{\widetilde{\mathcal{D}}}
\def\tcE{\widetilde{\mathcal{E}}}
\def\tcF{\widetilde{\mathcal{F}}}

\def\one{\mathbbm{1}}

\def\XP{X \backslash \sP}
\def\M0a{{}^t\cM_0^a}
\newcommand{\Ind}{{\operatorname{Ind}}}

\def\Xkbar{\overline{X}_{\overline{k}}}
\def\dx{{\rm d}x}

\newcommand{\dNis}{{\operatorname{dNis}}}
\newcommand{\loget}{{\operatorname{l\acute{e}t}}}
\newcommand{\ket}{{\operatorname{k\acute{e}t}}}
\newcommand{\ABNis}{{\operatorname{AB-Nis}}}
\newcommand{\sNis}{{\operatorname{sNis}}}
\newcommand{\sZar}{{\operatorname{sZar}}}
\newcommand{\set}{{\operatorname{s\acute{e}t}}}
\newcommand{\cofib}{\mathrm{Cofib}}

\newcommand{\Gmlog}{\G_m^{\log}}
\newcommand{\Gmlogred}{\overline{\G_m^{\log}}}

\newcommand{\varcolim}{\mathop{\mathrm{colim}}}
\newcommand{\varlim}{\mathop{\mathrm{lim}}}
\newcommand{\tensor}{\otimes}

\newcommand{\eq}[2]{\begin{equation}\label{#1}#2 \end{equation}}
\newcommand{\eqalign}[2]{\begin{equation}\label{#1}\begin{aligned}#2 \end{aligned}\end{equation}}

\def\varplim#1{\text{``}\varlim_{#1}\text{''}}
\def\det{\mathrm{d\acute{e}t}}
\def\federem#1{\begin{color}{teal}{#1}\end{color}}
\def\tomrem#1{\begin{color}{purple}{#1}\end{color}}

\author{Alberto Merici}
\address{Institut f\"ur Mathematik, Universit\"at Heidelberg\\ MATHEMATIKON, Im Neuenheimer Feld 205, 69120  Heidelberg, Germany.}
\email[A. Merici]{merici@mathi.uni-heidelberg.de}

\thanks{A.M. is supported by Horizon Europe's Marie
Sk{\l}odowska-Curie Action PF 101103309 ``MIPAC''\\
MSC classes: 14F30 (Primary), 14F42, 19E15}
\title[Motivic p-adic tame cohomology]{Motivic $p$-adic tame cohomology}

\begin{abstract} 
We construct a comparison functor between ($\A^1$-local) tame motives and ($\bcube$-local) log-\'etale motives over a field $k$ of positive characteristic. This generalizes Binda--Park--{\O}stv{\ae}r's comparison for the Nisnevich topology. As a consequence, we construct an $E_\infty$-ring spectrum $H\Z/p^m$ representing mod $p^m$ tame motivic cohomology: the existence of this ring spectrum and the usual properties of motives imply some results on tame motivic cohomology and a comparison with log \'etale motivic cohomology.
\end{abstract}
\maketitle
\section{Introduction}
Let $k$ be a field of characteristic $p$ and let $\Sm_k$ (resp $\lSm_k$) denote the category of (log) smooth (log) schemes over $k$, and $\bcube$ the log scheme $(\P^1,\infty)$, which ideally sits in-between $\A^1$ and $\P^1$ and represents ``sections on $\A^1$ that behave well at $\infty$'' (see \cite[Figure 1]{BPOCras}). The triangulated category of effective logarithmic motives over a field $\logDMeff(k)$ and its non-effective counterpart $\logDM(k)$  were introduced in \cite{BPO} generalizing the $\A^1$-invariant category of Voevodsky motives of \cite{VoevTriangCat} (see also \cite{MVW}), by choosing a suitable version of the Nisnevich topology for log schemes (the dividing Nisnevich topology, $\dNis$) and localizing at $\bcube$, with the scope of studying non-$\A^1$-invariant cohomology theories of log schemes. Later, for $S$ any quasi-compact quasi-separated log scheme, the $\bcube$-homotopy category $\mathbf{log}\mathcal{H}(S)$ and the $S^1$ stable and $\P^1$-stable counterparts $\logSH^{S^1}(S)$ and $\logSH(S)$ were introduced by the same authors in \cite{BPO-SH}, generalizing Morel---Voevodsky $\A^1$-homotopy categories $\mathcal{H}(S)$, $\SH^{S^1}(S)$ and $\SH(S)$ of \cite{MV} (see also \cite{Ayoub-thesis}), in order to study cohomology theories of log schemes represented by sheaves of spectra. In \cite[Theorem 8.2.11]{BPO} and \cite[Theorem 4.4]{DoosungA1invlog}, it was shown that the functor $\omega\colon \lSm_k\to \Sm_k$ that sends a log scheme $(X,\partial X)$ to $X-|\partial X|$ induces fully faithful functors:\[
\DMeff(k)\xrightarrow{\omega^*} \logDMeff(k)\quad \SH^{S^1}(k)\xrightarrow{\omega^*} \logSH^{S^1}(k),
\]
both characterized by the fact that for all $G\in \logDMeff(k)$ (or $\logSH^{S^1}(k)$) and all $(\ol{X},\partial X)\in \lSm_k$\[
(\omega^*G)(X,\partial X) = G(X-|\partial X|).
\]
This justifies the choice of one notation to denote both functors: it will be clear from the context which one is considered.
The functors $\omega^*$ have right adjoints $\omega_*$ by design (the $\A^1$-colocalization). If $k$ satisfies resolutions of singularities as in Notation \ref{nota:RS} (analogous to \cite[Main Theorem I and II]{Hironaka}), then these functors send the motive of a smooth scheme $X$ to the log motive of any smooth log compactification $(\ol{X},\partial X)$ as in Remark \ref{rmk:comp}, so the functors $\omega_*$ are both characterized by the fact that for all $F\in \logDMeff(k)$ (or $\logSH^{S^1}(k)$) and all $X\in \Sm_k$ with smooth log compactification $(\ol{X},\partial X)$\[
(\omega_*)F(X) = F(\ol{X},\partial X).
\]
In \cite[Remark 6.3]{mericicrys}, it was shown that this adjunction cannot be promoted to an adjunction between \'etale and log \'etale motives: in fact, if $X\in \Sm_k$ and $Y\to X$ is an \'etale cover, then the log \'etale sheafification of the \v Cech nerve $L_{\loget}\omega^*M(Y^\bullet)\to L_{\loget}\omega^*M(X)$ need not be an equivalence: the counterexample (which comes from \cite{ertl2021integral}) is an Artin--Schreier cover, which has wild ramification.

In this article, we show that by substituting the \'etale topology with the tame topology defined by H\"ubner--Schmidt in \cite{HS2021}, we indeed have a positive result, namely:
    \begin{thm}[see Theorem \ref{thm:comparison}]\label{thm:mot-intro}
    Let $k$ be a perfect field that satisfies resolutions of singularities as in Notation \ref{nota:RS}. Then the adjunction\[
    \begin{tikzcd}
    \SH^{S^1}(k) \ar[r,"\omega^*", shift left = 2] &\logSH^{S^1}(k)\ar[l,"\omega_*"]
    \end{tikzcd}
    \]
    induces an adjunction\[
    \begin{tikzcd}
    \SH^{S^1}_t(k) \ar[r,"\omega^*_t", shift left = 2] &\logSH^{S^1}_{\loget}\ar[l,"\omega_*^t"](k)
    \end{tikzcd}
    \]
fitting in commutative diagrams:\[
    \begin{tikzcd}
    \SH^{S^1}(k) \ar[r,"\omega^*"]\ar[d,"L_{(\A^1,t)}"] &\logSH^{S^1}(k)\ar[d,"L_{(\bcube,\loget)}"] && 
    \SH^{S^1}(k) &\logSH^{S^1}(k)\ar[l,"\omega_*"]\\
     \SH^{S^1}_t(k) \ar[r,"\omega^*_t"] &\logSH^{S^1}_{\loget}(k) &&
    \SH^{S^1}_t(k) \ar[u,"i_{(\A^1,t)}"]&\logSH^{S^1}_{\loget}\ar[l,"\omega_*^t"](k)\ar[u,"i_{(\bcube,\loget)}"]
    \end{tikzcd}
\]
and similarly for $\DMeff$.
\end{thm}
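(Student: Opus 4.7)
The plan is to verify that the existing adjunction $\omega^* \dashv \omega_*$ descends to the Bousfield localizations. By the universal property of Bousfield localization, it suffices to check that the composite $L_{(\bcube,\loget)}\circ\omega^*$ inverts $(\A^1,t)$-local equivalences; then $\omega^*_t$ exists and makes the left square commute, and the right adjoint $\omega^t_*$ making the right square commute is produced formally from it (equivalently, by adjunction $\omega_*$ preserves $\loget$-local objects).

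The compatibility with $\A^1$- and $\bcube$-equivalences is already part of the construction of $\omega^*$ in \cite{BPO, DoosungA1invlog}, so the new content concerns only the tame topology. Given a tame cover $f\colon Y \to X$ in $\Sm_k$ and a smooth log compactification $(\ol X, \partial X)\in \lSm_k$ with $\partial X$ a strict normal crossings divisor, the key step is to invoke Abhyankar's lemma to produce a smooth log compactification $(\ol Y, \partial Y)$ and a Kummer log-\'etale (in particular log-\'etale) morphism $(\ol Y, \partial Y)\to (\ol X, \partial X)$ extending $f$, possibly after a log-blow-up of $(\ol X, \partial X)$. Tameness of $f$ with respect to $\partial X$ is precisely the hypothesis that allows the normalization of $\ol X$ in the function field of $Y$ to carry a log-\'etale structure, while the Artin--Schreier counterexample of \cite{mericicrys} shows exactly what fails when tameness is dropped.

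Applying $\omega^*$ to the \v{C}ech nerve of $f$ and using the characterization $\omega^*M(X) \simeq M(\ol X, \partial X)$ from \cite{BPO, DoosungA1invlog}, one identifies the image (up to $\bcube$-local equivalence) with the \v{C}ech nerve of a Kummer log-\'etale cover of $(\ol X, \partial X)$, which is $\loget$-locally trivial. Hence $\omega^*$ factors through the tame localization, giving $\omega^*_t$, and the commutative right square with $\omega^t_*$ follows formally. The argument for $\DMeff$ is the same, replacing $\Sm_k$ by the category of finite log correspondences.

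The principal obstacle is the coherence of the compactifications across the simplicial object: one must choose the log compactifications so that the image under $\omega^*$ of the \v{C}ech nerve of $f$ is really computed by the normalizations of the iterated fibered products $\ol Y\times_{\ol X}\cdots\times_{\ol X}\ol Y$ equipped with the appropriate log structure. This is where resolution of singularities (Notation \ref{nota:RS}) is used iteratively, combined with the invariance of $\logSH^{S^1}_{\loget}(k)$ under log-blow-ups, and constitutes the main technical step of the proof.
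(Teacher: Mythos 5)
Your high-level strategy matches the paper's: show that $L_{(\bcube,\loget)}\circ\omega^*$ satisfies tame descent (equivalently inverts $(\A^1,t)$-equivalences), so that it factors through $L_{(\A^1,t)}$ and yields $\omega^*_t$, with the right adjoint and the commuting squares then obtained formally. Your route to $\omega_*^t$ --- that $\omega_*$ sends $(\bcube,\loget)$-local objects to $(\A^1,t)$-local ones --- is an equivalent alternative to the paper's, which instead invokes the adjoint functor theorem after checking that $\omega^*_t$ preserves colimits.

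The genuine gap is in the descent step, which is the content of Lemma~\ref{lem:tame-descent}. Your plan compactifies a tame cover $Y\to X$ directly via Abhyankar's lemma: normalize $\ol X$ in the function field of $Y$, correct the boundary by log-blow-ups and resolution of singularities, and match \v Cech nerves. That is essentially the mechanism behind \cite[Lemma 6.4]{mericicrys}, which the paper cites --- but that lemma applies only to \emph{finite} tame covers, and the Abhyankar mechanism is intrinsically finite: the normalization of $\ol X$ in $k(Y)$ is finite over $\ol X$, so it cannot restrict over $X$ to a non-finite $Y\to X$, yet the tame covers of \cite{HS2020} need not be finite. The actual new work in the paper's proof of Lemma~\ref{lem:tame-descent} is the reduction from general tame covers to finite ones: using Zariski's main theorem to find a closed $Z\subset X$ over whose complement the cover becomes finite, then inducting on the Krull dimension of $X$ via Gysin sequences, Nisnevich descent, and the monoidality of $L_{(\bcube,\loget)}\omega^*$. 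Your proposal does not address this reduction at all, and it also names the ``coherence of compactifications across the simplicial object'' as the main technical step without giving any argument for it --- the paper avoids confronting that issue at this level precisely by outsourcing the finite case to \cite[Lemma 6.4]{mericicrys}.
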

This result should be compared to \cite{AHI-Atiyah}, where the authors also consider the $\A^1$-colocalization of the motivic spectra constructed in \cite{AnnalaIwasaUnivers}, and get an inclusion $\SH(k)\to \mathbf{Mod}_{1_{\A^1}}\mathcal{MS}(k)$ that sends the motive of $X$ to the total fiber of the Gysin map induced the inclusion of the boundary $|\partial X|\hookrightarrow \ol{X}$ of any smooth log compactification $(\ol{X},\partial X)$ of $X$ as in Remark \ref{rmk:comp} (see \cite[Remark 6.26]{AHI-Atiyah}). For the same reasons explained in \cite[Remark 6.3]{mericicrys}, this cannot be promoted to the \'etale version of $\mathcal{MS}$, and our result shows a promising first step in promoting this in the tame setting. 
We remark that, on the other hand, the motivic spectra of \cite{AnnalaIwasaUnivers} are intrinsically $\P^1$-stable, while the result of Theorem \ref{thm:mot-intro} holds already in an $S^1$-stable setting.

One main application of the previous theorem is the following: for a fixed $m$, let $\nu_m(i)$ denote the mod $p^m$ motivic sheaves of Bloch--Illusie--Milne (to avoid confusion, we will not refer to them as the logarithmic de Rham--Witt sheaves, as they are not sheaves on logarithmic schemes): they are strictly $\A^1$-invariant Nisnevich sheaves with transfers (so $\nu_m(i)[0]\in \DMeff(k)$), but not strictly $\A^1$-invariant \'etale sheaves. In \cite{mericicrys}, we showed that the cohomology of the Rham--Witt sheaves with log poles $W_m\Lambda^n$ of Hyodo--Kato (see \cite{mokrane} or \cite{matsuue}) are representable in the category $\logDMeff[\loget](k)$, and therefore $L_{\loget}\omega^*(\nu_m(i)[0])\simeq \widetilde{\nu_m(i)}[0]$ (see Example \ref{ex:drW}), where $\widetilde{\nu_m(i)}$ is the log \'etale sheaf
\begin{equation}\label{eq:nu-tilde}
\widetilde{\nu_m(i)}\colon (X,\partial X)\mapsto \nu_m(i)(X-|\partial X|).
\end{equation}
As a consequence, we immediately deduce that the presheaves $H^1_t(-/k,\nu_m(n))$ are $\A^1$-invariant (see Example \ref{ex:drW}). Moreover, we deduce the $\A^1$-invariance and $\P^1$-stability of higher tame motivic cohomology by the purity result of Koubaa \cite[Theorem 1.3.1]{amine}, with the same assumptions (RS1) and (RS2) as in \ref{nota:RS}. Putting everything together we show that
\begin{thm}[see \eqref{eq:A1-inv}, Lemma \ref{lem:pbf} and Remark \ref{rmk:GL}]\label{thm:motivic-dRW} 
Let $k$ be a perfect field of characteristic $p$ that satisfies resolutions of singularities as in \ref{nota:RS}. For all $m$, the object \[\{\nu_m(i)[-i]\}_{i\geq 0}\in \mathrm{GrCAlg}(\cD(\Sh_{t}(\Sm_k,\Z/p^m)))\] builds up to an $E_\infty$-ring spectrum $H\Z/p^m$ in $\DM[t](k,\Z/p^m)$ such that for all $X\in \Sm_k$ we have that\[
\Map(\Sigma^{\infty}(X), \Sigma^{p,q}H\Z/p^m) \simeq R\Gamma_t(X,\nu_m(q))[p-q]
\]
Moreover, $H\Z/p^m$ is the unit of $\DM[t](k,\Z/p^m)$. 
\end{thm}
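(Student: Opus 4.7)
The plan is to promote the graded commutative algebra $\{\nu_m(i)\}$ from the derived category of tame sheaves to an $E_\infty$-ring spectrum in $\DM_t(k,\Z/p^m)$ by bootstrapping from the logarithmic setting via Theorem~\ref{thm:mot-intro}, and then to identify the result with the unit using Bloch--Gabber--Kato and Geisser--Levine.

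First I would verify $\A^1$-invariance. By the main result of \cite{mericicrys}, each $L_{\ket}\omega^*\nu_m(i)$ is representable in $\logDMeff(k,\Z/p^m)$. Applying the right adjoint $\omega_*^t$ provided by Theorem~\ref{thm:mot-intro} gives that $X\mapsto H^1_t(X,\nu_m(n))$ is $\A^1$-invariant, which is precisely \eqref{eq:A1-inv} in low cohomological degree. For the higher degrees I would invoke the Bloch--Gabber--Kato norm residue isomorphism to identify $H^n_t(-,\nu_m(n))$ with Milnor $K$-theory modulo $p^m$, whose $\A^1$-invariance is classical, together with the standard vanishing $R^i\Gamma_t(-,\nu_m(n))=0$ for $i>n$. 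Thus each $\nu_m(i)$ descends to an object of $\DM_t(k,\Z/p^m)$.

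Next, Lemma~\ref{lem:P1-stab} supplies $\P^1$-stability (via a projective bundle formula in tame cohomology), and the cup-product structure already produces a graded commutative algebra in $\cD(\Sh_t(\Sm_k,\Z/p^m))$. I would strictify this to a graded $E_\infty$-algebra via the standard symmetric monoidal model on tame $\P^1$-spectra, in the spirit of Spitzweck's construction of motivic $H\Z$. Combined with the previous step, this assembles an $E_\infty$-ring spectrum $H\Z/p^m\in \DM_t(k,\Z/p^m)$, and the mapping space formula
$$\Map(\Sigma^\infty X,\Sigma^{p,q}H\Z/p^m)\simeq R\Gamma_t(X,\nu_m(q))[p]$$
is then a direct unpacking of the construction.

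Finally, since $\nu_m(0)=\Z/p^m$ is the constant sheaf, the canonical map from the unit $\one_{\DM_t(k,\Z/p^m)}$ to $H\Z/p^m$ is an equivalence in bidegree $(0,0)$. To upgrade this to a global equivalence I would appeal to Remark~\ref{rmk:GL}: the Geisser--Levine theorem identifies $\Z/p^m$-motivic cohomology sheaves with $\nu_m(i)$, and in the tame topology the identification is globally realized by the spectrum just constructed because the tame topology kills the Artin--Schreier obstruction present in the \'etale case (cf.\ the counterexample of \cite[Remark 6.3]{mericicrys}). This last step is the main obstacle: promoting a levelwise identification to a coherent $E_\infty$-equivalence with the unit requires a rigidity-style input for tame motives, and it is precisely where the comparison Theorem~\ref{thm:mot-intro} enters decisively, since it is what converts the log-\'etale representability of \cite{mericicrys} into an identification that persists after $\omega_*^t$.
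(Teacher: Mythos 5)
Your overall skeleton — $\A^1$-invariance in low degrees via the log-\'etale comparison, higher degrees via Bloch--Gabber--Kato, $\P^1$-stability via Milnor $K$-theory, then assembling a $\P^1$-spectrum and identifying it with the unit via Geisser--Levine — matches the architecture of the paper, but two of the key steps as you have written them contain genuine errors.

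First, the ``standard vanishing $R^i\Gamma_t(-,\nu_m(n))=0$ for $i>n$'' is not a theorem, and the paper does not use it. Geisser--Levine gives $H^i_{\Nis}(X,\nu_m(n))\cong H^{i+n}_{M}(X,\Z/p^m(n))$, which vanishes for $i>\dim X$, not for $i>n$; and tame cohomology refines Nisnevich so nothing better is automatic. What the paper actually proves (Theorem~\ref{thm:A1-inv-higher-cohomology}) is $\A^1$-invariance of $H^q_t(-,\nu_m(n))$ for $q\ge 2$, and this requires the nontrivial input of Theorem~\ref{thm:tame-direct-summand}: after showing that $R^q\alpha_*(\sK_n^M)_{\et}$ vanishes except in degrees $0$ and $2$ (using Hilbert 90 for Milnor $K$-theory, the surjectivity of $\dlog$, and the CTSS vanishing of $H^q_{\et}$ on affines for $q\ge 2$), one deduces that $H^q_t(X,(\sK_n^M)_t)$ is a functorial direct summand of $H^q_{\et}(X,(\sK_n^M)_{\et})$, and only then does one run the five-lemma through the exact sequence $0\to\sK_n^M\to\sK_n^M[1/p]\to\colim_m\nu_m(n)\to 0$. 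Your sketch compresses this into ``$\A^1$-invariance of Milnor $K$-theory is classical,'' which silently assumes that the tame-to-\'etale comparison for $\sK_n^M$ is already under control. Without the direct-summand theorem, ``classical'' $\A^1$-invariance of Milnor $K$-theory (which is Nisnevich-local or \'etale-after-inverting-$p$) does not transfer to the tame topology.

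Second, your treatment of the unit is wrong in an instructive way. You appeal to a ``rigidity-style input'' and say Theorem~\ref{thm:mot-intro} enters decisively for the unit identification. It does not. The actual argument in Remark~\ref{rmk:GL} is purely formal and independent of the log-\'etale comparison: Geisser--Levine gives $\Z/p^m(n)\simeq\nu_m(n)[-n]$ already at the Nisnevich level, so $\{\nu_m(*)[-*]\}$ builds the unit in $\DM_{\Nis}(k,\Z/p^m)$; the localization $L^{st}_{\A^1,t}\colon\DM_{\Nis}\to\DM_{t}$ is monoidal and hence preserves units; and the earlier $\A^1$-invariance statement guarantees $L_{\A^1,t}\nu_m(n)=L_t\nu_m(n)$, so the localized spectrum really is $H\Z/p^m$ as constructed. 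No rigidity and no $\omega_*^t$ is needed here; Theorem~\ref{thm:mot-intro} is used only upstream (through Example~\ref{ex:drW}) to obtain $\A^1$-invariance of $H^0_t$ and $H^1_t$. Finally, note that Lemma~\ref{lem:P1-stab} alone does not ``supply $\P^1$-stability'': it gives a vanishing for Milnor $K$-theory on tamely henselian bases, and the paper still has to compute $H^*_t(\P^1_X,\nu_m(n))$ explicitly via Zariski descent along $\A^1\amalg\A^1\to\P^1$, the $\A^1$-invariance just proved, and the residue computation of Gros to produce the $\nu_m(n-1)(X)$ summand.
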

The choice of the terminology $H\Z/p^m$ is justified by \cite{GL}, in fact \emph{a posteriori} $H\Z/p^m$ is the image of motivic cohomology via the localization $\DM(k,\Z/p^m)\to \DM[t](k,\Z/p^m)$.  Using the motivic properties of $\DM$ (see Remark \ref{rmk:pbf-gysin}), we deduce immediately the result:
\begin{thm}\label{thm:main-intro} Let $k$ be a perfect field of characteristic $p$ that satisfies resolutions of singularities as in \ref{nota:RS}. For all $m\geq 1$ and $q\geq 0$, for $X\in \Sm_k$, $\cE\to X$ a vector bundle of rank $r+1$, then we have an isomorphism
        \[
        H^q_t(\cE/k,\nu_m(n))\cong H^q_t(X/k,\nu_m(n))
        \]
        and if $\P(\cE)$ is the associated projective bundle, the Chern classes induce an isomophism\[
        H^q_t(\P(\cE)/k,\nu_m(n))\cong \bigoplus_{i=0}^r H^{q-i}_t(X/k,\nu_m(n-i)).
        \]
\end{thm}
The case $n=0$ of Theorem \ref{thm:motivic-dRW} has already been proved in \cite[15.4]{HS2021} (still assuming that $k$ satisfies resolutions of singularities), deducing it from the adic versions \cite[Corollary 14.5 and 14.6]{Huebner2021}. 
Finally, another interesting application of our result is the following comparison
\begin{thm}\label{thm:comparison-logetale}
    Let $k$ be a field of characteristic $p$ satisfying resolutions of singularities as in \ref{nota:RS}. For all $X\in \Sm_k$ with smooth log compactification $(\ol{X},\partial X)$ as in Remark \ref{rmk:comp} we have that\[
R\Gamma_t(X,\nu_m(n))\simeq R\Gamma_{\loget}((\ol{X},\partial X),\widetilde{\nu_m(n)}),
\]
where $\widetilde{\nu_m(n)}$ is as in \eqref{eq:nu-tilde}.
\end{thm}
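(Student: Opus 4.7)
The plan is to reduce the equivalence to a consequence of the adjunction of Theorem~\ref{thm:mot-intro} combined with the representability of Theorem~\ref{thm:motivic-dRW}. The key identification is that $\omega^*_t H\Z/p^m$ serves as the unit of $\logDM[\loget](k,\Z/p^m)$: this follows from $\omega^*_t$ being symmetric monoidal, as it is obtained by restricting the symmetric monoidal $\omega^*$ to the $t$-/$\loget$-localizations. Moreover $\omega^*_t$ sends $\nu_m(i)$ to $L_{\loget}\omega^*\nu_m(i)$, so $\omega^*_t H\Z/p^m$ is the $E_\infty$-ring spectrum in $\logDM[\loget](k,\Z/p^m)$ assembled from $\{L_{\loget}\omega^*\nu_m(i)\}_{i \geq 0}$ in $\mathrm{GrCAlg}(\cD(\Sh_{\loget}(\lSm_k,\Z/p^m)))$. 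Running the argument of Theorem~\ref{thm:motivic-dRW} on the log-\'etale side yields, for any smooth log scheme $(\ol{Y},\partial Y)$,
\[
\Map_{\logDM[\loget](k,\Z/p^m)}(M(\ol{Y},\partial Y), \Sigma^{p,n}\omega^*_t H\Z/p^m) \simeq R\Gamma_{\loget}((\ol{Y},\partial Y), L_{\loget}\omega^*\nu_m(n))[p].
\]

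Granting this, the theorem will follow from the chain
\begin{align*}
R\Gamma_t(X,\nu_m(n))[p]
&\simeq \Map_{\DM[t]}(M(X), \Sigma^{p,n}H\Z/p^m) \\
&\simeq \Map_{\logDM[\loget]}(\omega^*_t M(X), \Sigma^{p,n}\omega^*_t H\Z/p^m) \\
&\simeq \Map_{\logDM[\loget]}(M(\ol{X},\partial X), \Sigma^{p,n}\omega^*_t H\Z/p^m) \\
&\simeq R\Gamma_{\loget}((\ol{X},\partial X), L_{\loget}\omega^*\nu_m(n))[p],
\end{align*}
where the first equivalence is Theorem~\ref{thm:motivic-dRW}, the second uses the fully faithfulness of $\omega^*_t$, the third uses the characterization $\omega^*_t M(X) \simeq M(\ol{X},\partial X)$ from Theorem~\ref{thm:mot-intro}, and the fourth is the display above; desuspending by $p$ gives the statement.

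The hard part will be the first paragraph: showing that $\omega^*_t H\Z/p^m$ genuinely represents log-\'etale cohomology with coefficients $L_{\loget}\omega^*\nu_m(n)$. While the monoidal formalism forces it to be the unit of $\logDM[\loget](k,\Z/p^m)$, identifying this unit concretely as the spectrum assembled from $\{L_{\loget}\omega^*\nu_m(i)\}$ requires rerunning the construction of Theorem~\ref{thm:motivic-dRW}---which combines the Bloch--Gabber--Kato norm-residue isomorphisms with the $\A^1$-invariance and $\P^1$-stability of tame motivic cohomology---on the log-\'etale side, using the representability of $L_{\loget}\omega^*\nu_m(i)$ in $\logDMeff(k)$ from \cite{mericicrys}. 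A secondary point is the fully faithfulness of $\omega^*_t$ used in step two of the display, which should follow from the fully faithfulness of $\omega^*$ together with the commutative diagrams of Theorem~\ref{thm:mot-intro} and the fully faithfulness of $i_{(\A^1,t)}$ and $i_{(\bcube,\loget)}$.
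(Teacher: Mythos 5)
Your approach is genuinely different from the paper's, but step (2) — the fully faithfulness of $\omega^*_t$ — is not the secondary formality you suggest; it is essentially circular. By adjunction the equivalence
\[
\Map_{\DM[t]}(M(X), \Sigma^{p,n}H\Z/p^m) \simeq \Map_{\logDM[\loget]}(\omega^*_t M(X), \Sigma^{p,n}\omega^*_t H\Z/p^m)
\]
is the same as asking the unit map $\Sigma^{p,n}H\Z/p^m \to \omega_*^t\omega^*_t\Sigma^{p,n}H\Z/p^m$ to induce an equivalence on sections over $X$, which, once you grant the identifications in your first paragraph, is exactly the statement of the theorem. Unwinding the commutative diagrams of Theorem~\ref{thm:mot-intro} more carefully gives $\Map(\omega^*_t M,\omega^*_t N)\simeq\Map\bigl(\omega^*i_{(\A^1,t)}M,\ i_{(\bcube,\loget)}L_{(\bcube,\loget)}\omega^*i_{(\A^1,t)}N\bigr)$; this collapses to $\Map(M,N)$ (using fully faithfulness of $\omega^*$ and $i_{(\A^1,t)}$) only when $\omega^*i_{(\A^1,t)}N$ is already $(\bcube,\loget)$-local. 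That is precisely the ``very special property'' the remark following the theorem flags as unknown for a general $\A^1$-invariant sheaf, so general fully faithfulness of $\omega^*_t$ is not available, and for the particular $N=\Sigma^{p,n}H\Z/p^m$ you need, establishing it is the whole content.

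The paper's proof sidesteps all of this and works at the effective, unstable level. It starts from the unit-of-adjunction map $R\Gamma_t(X,\nu_m(n))\to R\Gamma_t(X,\omega_*^t\omega^*_t\nu_m(n))$, identifies the target with $R\Gamma_{\loget}((\ol{X},\partial X),L_{\loget}\omega^*\nu_m(n))$ using the strict $(\bcube,\loget)$-locality of $L_{\loget}\omega^*\nu_m(n)$ from Example~\ref{ex:drW}, and then proves the map is an equivalence by dévissage: induction on the number $r$ of boundary divisors, using the Gysin sequence when $r=1$ and Mayer--Vietoris for $r>1$, down to the case of proper $X$ with trivial log structure, where tame, étale and log-étale cohomology of $\nu_m(n)$ all agree. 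If you wish to salvage your plan you would have to replace ``$\omega^*_t$ is fully faithful'' with the restricted assertion that $\omega^*i_{(\A^1,t)}H\Z/p^m$ is $(\bcube,\loget)$-local, assembled levelwise from Example~\ref{ex:drW}, and then argue that this forces the unit map to be an equivalence on the relevant sections — but at that point you are redoing the paper's reduction, only at the $\P^1$-stable level, which the statement does not require.
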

We remark that this result requires a very special property of $\nu_m(n)$, \emph{i.e.} the fact that $L_{\loget}\omega^*(\nu_m(n)[0])\simeq \widetilde{\nu_m(n)}[0]$ is $(\bcube,\loget)$-local: we do not know whether to expect this to hold in general for any $(\A^1,\Nis)$-local object.
\subsection*{Future perspectives} 

We expect Theorem \ref{thm:mot-intro} to hold without the assumption on resolutions of singularities. In fact, it was shown in \cite{BLMP} that log prismatic and syntomic cohomology give rise to motivic spectra in $\logSH_{\loget}(S)$ for any quasisyntomic $p$-adic scheme $S$. We expect a similar result as Theorem \ref{thm:mot-intro} to hold for a general base scheme $S$, so that the object $\omega_*^t\mathbf{E}^{\syn}$ can be studied as a tame version of syntomic cohomology. 

\subsection*{Acknowledgements}
	The author would like to thank F. Binda, D. Park, T. Lundemo, K. H\"ubner, A. Schmidt, S. Saito and P.A. {\O}stv{\ae}r for many valuable discussions and comments on earlier versions of the results in this paper, and Joseph Ayoub for pointing out a gap in a previous version of Section 3. He also thanks A. Koubaa for sharing his version of \cite{amine}. He also thanks the anonymous referee for a meticulous analysis of the paper, providing helpful comments which filled some small gaps in the arguments and led to an improved presentation. This project is supported by the MSCA-PF \emph{MIPAC} carried out at the University of Milan. The author is very thankful for the hospitality and the great work environment.

\section{Tame and log-\'etale motives}

We recall the definition of the tame site of \cite{HS2021}. For $S$ a scheme and $X$ an $S$-scheme, the tame site $(X/S)_t$ was defined as the site whose underlying category is $X_{\et}$, and $\{U_i\to U\}$ is a tame cover if an only if for all $x\in U$ and all $S$-valuations $v$ on $k(x)$, there is $y\in U_i$ for some $i$ lying over $x$ and a valuation $w$ on $k(y)$ extending $v$ such that the extension of valued fields $(k(x),v)\to (k(y),w)$ is tame, \emph{i.e.} the ramification index of $\cO_v\to \cO_w$ is prime to the characteristic of the residue field of $\cO_v$. In the rest of the paper, we will only consider the case $S=\Spec(k)$ where $k$ is a perfect field of characteristic $p$, therefore to ease the notation, for $X$ a $k$-scheme we will write $H^*_t(X,-)$ for $H^*_t(X/k,-)$.

We recall the most important properties of tame cohomology:
\begin{enumerate}
    \item By construction, every tame sheaf is a Nisnevich sheaf
    \item If $X$ is a quasi-compact $k$-scheme every tame cover admits a finite subcover \cite[Theorem 4.1]{HS2021}, therefore it is enough to consider covers $U\to X$ where $U=\sqcup U_i$ for a finite cover $\{U_i\to X\}$.
    \item On quasi-compact quasi-separated $k$-schemes, tame cohomology commutes with filtered colimits of sheaves \cite[Theorem 4.5]{HS2021} and cofiltered limits of schemes with affine transition maps \cite[Theorem 4.7]{HS2021}
    \item If $F$ is an \'etale sheaf of $\Z/m\Z$-modules where $m$ is prime to $p$, then $H^q_t(X,F)=H^q_{\et}(X,F)$ for every $X$ quasi-compact $k$-scheme \cite[Proposition 8.1]{HS2021}.
    \item If $X$ is proper over $k$ with the property that every finite set of points is contained in an affine open, then for every tame sheaf $F$ of abelian groups $H^q_t(X,F) = H^q_{\et}(X,a_{\et}F)$ \cite[Proposition 8.2]{HS2021}.
\end{enumerate}

Let $\Cor_k$ be the category of finite correspondences over $k$. By \cite[Lemma 16.1]{HS2021} the category of tame sheaves with transfers $\Sh_t(\Cor_k)$ is a Grothendieck abelian category and for every $F\in \Sh_t(\Cor_k)$ the tame cohomology presheaves \[
U\mapsto H^q_t(U,F)
\]
are presheaves with transfers. Therefore, similarly to Voevodsky's construction, one gets the stable $\infty$-category of effective tame motives and of tame motivic sheaves of $S^1$-spectra as\[
\DMeff[t](k):= L_{(\A^1,t)}\cD(\Sh_t(\Cor_k))\quad \SH_t^{S^1}(k):= L_{(\A^1,t)}\Sh_t(\Sm_k,\Spt).
\]
As usual, the graph functor $\Sm_k\hookrightarrow \Cor_k$ and the Dold-Kan functor $\cD(\Z)\to \Spt$ induce a localization\[
\SH^{S^1}_t(k)\to \DMeff[t](k)
\] 
and there are evident localizations\[
\begin{tikzcd}
\DMeff[\Nis](k)\ar[r,"L_{(\A^1,t)}"]&
\DMeff[t](k)\ar[r,"L_{(\A^1,\et)}"]&\DMeff[\et](k)\\
\SH^{S^1}_{\Nis}(k)\ar[r,"L_{(\A^1,t)}"]&
\SH^{S^1}_t(k)\ar[r,"L_{(\A^1,\et)}"]&\SH^{S^1}_{\et}(k)\\
\end{tikzcd}
\]
with right adjoints\[
\begin{tikzcd}
\DMeff[\et](k)\ar[r,"i_{(\A^1,\et)}"]&
\DMeff[t](k)\ar[r,"i_{(\A^1,t)}"]&\DMeff[\Nis](k)\\
\SH^{S^1}_{\et}(k)\ar[r,"i_{(\A^1,\et)}"]&
\SH^{S^1}_t(k)\ar[r,"i_{(\A^1,t)}"]&\SH^{S^1}_{\Nis}(k)
\end{tikzcd}
\]
For $X\in \Sm_k$ and $\tau$ any of the topologies above, we let $\cM^\tau(X)\in \DMeff[\tau](k)$ and $\cH^\tau(X)\in \SH^{S^1}_{\tau}(k)$ the image of the Yoneda funtcor. By sheafifying \cite[Theorem 3.2.23]{MV} we get a fiber sequence\[
\cH^t(X-Z)\to \cH^t(X)\to \cH^t(\mathrm{Th}_{N_Z}).
\]
\begin{rmk}\label{rmk:pbf-gysin}
    In the case with transfers, by \cite[Properties 14.5.]{MVW} we deduce that
\begin{enumerate}
    \item \label{item:1} If $\cE \to X$ is a vector bundle of rank $r+1$, we have an equivalence\[
        \cM^t(\cE)\xrightarrow{\simeq} \cM^t(X),
    \]
    and if $\P(\cE)$ is the associated projective bundle, the Chern classes induce an equivalence\[
    \bigoplus_{i=0}^r \cM^t(X)\otimes \cM^t(\P^1,i_0)\simeq \cM^t(\P(\cE))
    \]
    where $\cM^t(\P^1,i_0)$ is the complement of the splitting of $\cM^t(\P^1)\to \cM^t(k)$ given by the zero section.
    \item \label{item:2} Let $X$ be a smooth scheme over $k$ and $Z$ a smooth closed
subscheme of $X$ of codimension $c$. Then there is a Gysin fiber sequence\[
\cM^t(X-Z)\to \cM^t(X)\to \cM^t(Z)\otimes \cM^t(\P^1,i_0)^{\otimes c}
\]
\end{enumerate}
\end{rmk}

    Moreover, by \cite[Proposition 15.7]{MVW}, $\P^1$ is a symmetric object, so we can construct the stable $\infty$-category of tame motives (resp. tame motivic spectra) as\[
    \DM[t](k):=\Spt_{\P^1}(\DMeff[t](k))\quad \SH_t(k):=\Spt_{\P^1}(\SH^{S^1}_t(k))
    \]
using the general machinery of \cite{robalo} and \cite{HoveySpectra} (see also \cite[Section 1]{AnnalaIwasaUnivers}). In particular, we have the following result, whose proof is completely formal (see \emph{e.g.} \cite[\S 6.23]{CiDeg-LocalStableHomAlg}):
\begin{thm}\label{thm:ring-spectra}
Let $E_*$ be a graded commutative monoid in $\SH^{S^1}_t(k)$, together with a section $c\colon \cH^t(\P^1)\to E_1[2]$ such that for all $X \in \Sm_k$ and all $i$, the following composition is an equivalence:
\[
        \begin{tikzcd}
        \Map(\cH^t(X),E_i)\ar[r]\ar[ddrr,bend right=8] &\Map(\cH^t(\P^1_X),E_i\otimes \cH^t(\P^1)) \ar[r,"c"] &\Map(\cH^t(\P^1_X),E_i\otimes E_{1}[2])\ar[d,"\mu_{i,1}"]\\
        &&\Map(\cH^t(\P^1_X),E_{i+1}[2])\ar[d]\\
        &&\Map(\cH^t(\P^1_X,i_0),E_{i+1}[2]).
        \end{tikzcd}
\]
Then there is $\mathbf{E}\in \CAlg(\SH_t(k))$ such that for all $X\in \Sm_k$\[
\Map_{\SH_t(k)}(\Sigma^{\infty}(\cH^t(X)),\Sigma^{m,n}\mathbf{E}) \simeq R\Gamma_t(X,E_n[m]).
\]
Similarly, if $E_*$ is a graded commutative monoid in $\DMeff[t](k)$ as above, then there is a ring spectrum $\mathbf{E}$ in $\DM[t](k)$ representing $E_*$.
\end{thm}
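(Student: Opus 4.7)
The plan is to follow the standard formal $\P^1$-stabilization machinery, and reduce the construction of $\mathbf{E}$ to a universal property of the presentably symmetric monoidal $\infty$-category $\SH_t(k)=\Spt_{\P^1}(\SH^{S^1}_t(k))$ in the sense of Robalo.

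First, from the graded commutative monoid $E_*$ together with the section $c$, I would build a $\P^1$-prespectrum in $\SH^{S^1}_t(k)$. Its $n$-th level is $E_n$, and the bonding map
\[
\cH^t(\P^1)\otimes E_n \xrightarrow{\,c\otimes\id\,} E_1[2]\otimes E_n \xrightarrow{\,\mu_{1,n}\,} E_{n+1}[2]
\]
is induced by $c$ and the multiplication of $E_*$. The graded commutative monoid structure on $E_*$ promotes this sequence to a commutative algebra object in the $\infty$-category of $\P^1$-prespectra, by the same symmetric-monoidal bookkeeping used in \cite{BLMP, robalo}.

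Second, I would use the hypothesis that the displayed composition is an equivalence to conclude that the prespectrum is already a $\P^1$-$\Omega$-spectrum. Since $\cH^t(X)$, $X\in\Sm_k$, generate $\SH^{S^1}_t(k)$ under colimits, and since $\cH^t(\P^1,i_0)$ is (up to a shift) the symmetric dual of $\cH^t(\P^1)$ in the relevant sense, the hypothesized equivalence on all $X$ and $i$ is precisely the statement that $E_i \simeq \underline{\Map}(\cH^t(\P^1,i_0), E_{i+1}[2])$. Hence by the universal property of $\Spt_{\P^1}(\SH^{S^1}_t(k))$ as the presentably symmetric monoidal $\infty$-category obtained from $\SH^{S^1}_t(k)$ by inverting $\otimes \cH^t(\P^1)$, the prespectrum descends to an object $\mathbf{E} \in \SH_t(k)$; the commutative algebra structure is transported along the symmetric monoidal stabilization functor, placing $\mathbf{E}$ in $\CAlg(\SH_t(k))$.

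Third, the mapping space formula
\[
\Map_{\SH_t(k)}(\Sigma^{\infty}\cH^t(X),\Sigma^{m,n}\mathbf{E})\simeq R\Gamma_t(X,E_n[m])
\]
follows from the adjunction $\Sigma^{\infty}\dashv \Omega^{\infty}$ in the $\P^1$-stable setting: since $\mathbf{E}$ is an $\Omega$-spectrum, $\Omega^{\infty}\Sigma^{m,n}\mathbf{E}\simeq E_n[m]$, and the mapping spectrum computation in $\SH^{S^1}_t(k)$ recovers tame hypercohomology with coefficients in $E_n[m]$. The statement in $\DM[t](k)$ is obtained by the same construction, using that the linearization functor $\SH^{S^1}_t(k)\to \DMeff[t](k)$ is symmetric monoidal and compatible with $\P^1$-stabilization.

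The only nontrivial point is organizing the $E_\infty$-coherences: upgrading the bare graded commutative monoid $E_*$ to a coherently commutative algebra in $\P^1$-prespectra. This is however purely formal and identical to the arguments given in \cite[\S 2.2]{BLMP}, so I would simply invoke them verbatim once the bonding maps above are in place.
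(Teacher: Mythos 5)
Your proposal is correct and follows exactly the route the paper has in mind: the paper itself simply declares the proof ``completely formal'' and points to \cite[2.2]{BLMP}, and your argument is a faithful unwinding of that formal machinery (assemble the $\P^1$-prespectrum from $E_*$ and $c$, use the hypothesized equivalence to recognize it as an $\Omega$-spectrum, invoke Robalo's universal property of $\Spt_{\P^1}$ to land in $\SH_t(k)$, and transport the $E_\infty$-structure along the symmetric monoidal stabilization functor). Nothing to add.
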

By construction, if $\mathbf{E}\in \CAlg(\DM[t](k))$ representing $E_*$, then for all $q\geq 0$ the properties listed above imply:
\begin{enumerate}
    \item If $\cE \to X$ is a vector bundle of rank $r+1$, we have\[
        H^q_t(\cE,E_n)\xrightarrow{\simeq} H^q_t(X,E_n).
    \]
    and\[
    \bigoplus_{i=0}^r H^{q-2i}_t(X,E_{n-i})\cong H^q_t(\P(\cE),E_n).
    \]
    \item Let $X$ be a smooth scheme over $k$ and $Z$ a smooth closed
subscheme of $X$ of codimension $c$. Then the Gysin sequence gives\[
H^q_{t,Z}(X,E_n)\cong H^{q-2c}_{t}(Z,E_{n-c}).
\]
\end{enumerate}
We now compare tame with \'etale motives with invertible coefficients. First we record the following result, which is essentially \cite[Proposition 8.1]{HS2021}.
\begin{prop}\label{prop:tame-etale-transfers}
    Let $S$ be an $\F_p$-scheme, $X$ an $S$-scheme and $F$ be an \'etale sheaf of $\Z[1/p]$-modules on $X$. Then $H^q_{\et}(X,F)=H^q_t(X/S,F)$.  
\end{prop}
\begin{proof}
  The same proof of \emph{loc.cit.} works here: indeed if $R$ is henselian with residue field $k(R)$, then for all \'etale sheaves $G$ (not necessarily torsion) on $\Spec(R)$ we have $H^q_{\et}(\Spec(R),G) = H^q_{\et}(\Spec(k(R)),G_{k(R)})$: this is well known, see \cite[VII, Cor 8.6]{SGA4}. Then this allows to conclude that for every tame point $(\ol{x}_L,\ol{v}_L)$ of $X$ as in \emph{loc.cit.}\[
    H^q_{\et}(X_{(\ol{x}_L,\ol{v}_L)}^{th},F)\cong  H^q_{\et}(\Spec(L),F_{L}) = 0,
    \]
    since the Galois group of $L$ is a pro-$p$-group.
\end{proof}
\begin{lemma}\label{lem:tame-etale-sheafification}
Let $F$ be a tame sheaf of $\Z[1/p]$-modules with transfers. Then $F$
is also an \'etale sheaf with transfers.
    \begin{proof}
        The proof is completely analogous to \cite[Lemma 14.21]{MVW}, we report it for completeness. As in \emph{loc.cit.} we need to show that the kernel and cokernel of $F\to F_{\et}$ vanish, hence we may suppose that $F_{\et}=0$. If $F\neq 0$, there is $S=\Spec(R)\in \widetilde{\Sm_k}$ with $R$ tamely henselian and a nonzero element $c\in F(S)$. Since $F_{\et}=0$, there is a finite \'etale map $S'\to S$ with $c_{S'}=0$. Since $S$ is tamely henselian, the degree of $S'\to S$ is $p^r$ for $r\geq 0$ as the absolute Galois group of $k(S)$ is a pro-$p$-group, so since the map\[
        F(S)\to F(S')\to F(S)
        \]
        is the multiplication by $p^r$, which is invertible in $F(S)$, we conclude that $c=0$. This contradiction shows that $F=0$. 
    \end{proof}
\end{lemma}
\begin{cor}
    The sheafification map $\cD(\Sh_{t}^{tr}(k,\Z[1/p])\to \cD(\Sh_{\et}^{tr}(k,\Z[1/p])$ is an equivalence, therefore $\DMeff[t](k,\Z[1/p])\simeq \DMeff[\et](k,\Z[1/p])$ and $\DM[t](k,\Z[1/p])\simeq \DM[\et](k,\Z[1/p])$.
    \begin{proof}
        This is analogous to \cite[Proposition 2.2.9]{CD-etale} and follows from Proposition \ref{prop:tame-etale-transfers} and Lemma \ref{lem:tame-etale-sheafification}.
    \end{proof}
\end{cor}

Recall now the effective log motives $\logDMeff(k)$ and $S^1$-spectra $\logSH^{S^1}(k)$ defined in \cite{BPO} and \cite{BPO-SH}, with the localizations\[
\begin{tikzcd}
\logDMeff[\dNis](k)\ar[r,"L_{(\bcube,\loget)}"]&\logDMeff[\loget](k)\\
\logSH^{S^1}_{\dNis}(k)\ar[r,"L_{(\bcube,\loget)}"]&\SH^{S^1}_{\loget}(k).\\
\end{tikzcd}
\] 
We let $\cH_{\log}^{\tau}(-)$ and $\cM_{\log}^{\tau}(-)$ be the Yoneda functors similary as before. There are adjoint functors (see \cite[Construction 4.0.8]{BPO-SH})\[
\begin{tikzcd}
    \SH^{S^1}(k) \ar[r,"\omega^*"] &\logSH^{S^1}(k)\ar[l,shift left = 2,"\omega_*"]\ar[l,"\omega_\sharp"', shift right = 3]
\end{tikzcd}
\]
and similarly for $\DM$. Recall the assumption:
	\begin{nota}\label{nota:RS}
		We say that $k$ satisfies resolutions of singularities if the following two properties are satisfied (see \cite[Definition 7.6.3]{BPO} and \cite[Main Theorem I and II]{Hironaka}):
		\begin{enumerate}
			\item[(RS1)] For any integral scheme $X$ of finite type over $k$, there is a proper birational morphism $Y \to X$ of schemes over $k$, which is an isomorphism on the smooth locus, such that $Y$ is smooth over $k$.
			\item[(RS2)] Let $f \colon Y \to X$ be a proper birational morphism of integral schemes over $k$ such that $X$ is smooth over $k$ and let $Z_1, \ldots , Z_r$ be smooth divisors forming a strict normal crossing divisor on $X$. Assume that
			\[f^{-1}(X-Z_1 \cup\ldots \cup Z_r)\to X-Z_1 \cup\ldots \cup Z_r\] is an isomorphism. Then there is a sequence of blow-ups\[
			X_n\xrightarrow{f_{n-1}} X_{n-1} \xrightarrow{f_{n-2}}\ldots \xrightarrow{f_{0}}X_0\simeq X
			\]
			along smooth centers $W_i \subseteq X_i$ such that
			\begin{enumerate}[label=\alph*.]
				\item the composition $X_n \to X$ factors through $f$,
				\item $W_i$ is contained in the preimage of $Z_1 \cup \ldots \cup Z_r$ in $X_i$,
				\item $W_i$ has strict normal crossing with the sum of the reduced strict transforms of \[
				Z_1,\ldots,Z_r,f_0^{-1}(W_0),\ldots,f_{i-1}^{-1}(W_{i-1})\]
				in $X_i$.
			\end{enumerate}
		\end{enumerate}
	\end{nota}
    \begin{rmk}\label{rmk:comp}
        If $k$ satisfies (RS1) and (RS2) above, then for every scheme $X\in \Sm_k$ there exists $\ol{X}\in \Sm_k$ with an open immersion $j\colon X\subseteq \ol{X}$ such that $X-j(X)$ is the support of a simple normal crossing divisor. This divisor supports then a DF log structure $\partial X$ in the usual way (see \cite[Definition 1.7.1]{ogu}). The resulting log scheme $(X,\partial X)$ is called a \emph{smooth log compactification}.
    \end{rmk}
By \cite[Theorem 4.4 (2)]{DoosungA1invlog} (resp.  \cite[Proposition 8.2.8]{BPO}), if $k$ satisfies (RS1) and (RS2) then for all $X\in \Sm_k$ with smooth log compactification $(\ol{X},\partial X)$ as above, the image of $\cH^{\Nis}(X)$ (resp. $\cM^{\Nis}(X)$) via the above functor is $\cH_{\log}^{\dNis}(\ol{X},\partial X)$ (resp. $\cM_{\log}^{\dNis}(\ol{X},\partial X)$), in particular the functor $\omega^*$ is monoidal. 
\begin{lemma}\label{lem:tame-descent}
Let $k$ be a perfect field of characteristic $p$ that satisfies resolutions of singularities as in \ref{nota:RS}. Let $U\to X$ be a tame cover in $\Sm_k$ and for all $m$ let $U^m$ denote the $m$-fold fiber product $U\times_X U\ldots \times_X U$. Then the colimit along the \v Cech nerve induces an equivalence:\[
L_{(\bcube,\loget)}\omega^*\cH^{\Nis}(X)\simeq \colim_m L_{(\bcube,\loget)}\omega^*\cH^{\Nis}(U^m)\quad \textrm{in }\logSH^{S^1}_{\loget}(k)
\]
and similarly for $\logDMeff[\loget]$.
\end{lemma}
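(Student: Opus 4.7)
The strategy is to reduce to the generating covers of the tame topology and handle each class separately. By \cite{HS2020}, the tame topology on $\Sm_k$ is generated by Nisnevich covers together with tame étale covers (finite étale covers tamely ramified along any compactification). Since \v Cech descent is closed under composition of covers and \v Cech descent for a generating class implies \v Cech descent for the topology it generates, it suffices to verify the statement in each of these two cases. I will also use that \v Cech descent is detected in $\logSH^{S^1}_{\loget}(k)$ (rather than merely the $\bcube$-localization), since the $\bcube$-colocalization is the right adjoint in the chain $L_{(\bcube,\loget)} = L_{\bcube} \circ L_{\loget}$ and preserves the relevant equivalences.

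For a Nisnevich cover $V \to X$, I would use (RS1)--(RS2) to choose compatible smooth Cartier compactifications $(\ol{V^m}, \partial V^m)$ of the \v Cech levels $V^m$, mapping to a fixed smooth Cartier compactification $(\ol{X}, \partial X)$; by the standard patching argument (cf.\ the proof of \cite[Proposition 8.2.8]{BPO}) this can be arranged so that the resulting simplicial object is a $\dNis$-hypercover of $(\ol{X}, \partial X)$. Using the identification $\omega^*\cH^{\Nis}(Z) \simeq \cH^{\dNis}_{\log}(\ol{Z}, \partial Z)$ from \cite[Theorem 4.4]{DoosungA1invlog}, the required equivalence becomes $\dNis$-descent, which is a fortiori $\loget$-descent. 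For a tame étale cover $f \colon U \to X$, the key input is that after choosing a smooth Cartier compactification $(\ol{X}, \partial X)$, one can (by Abhyankar's lemma together with (RS2)) produce a smooth Cartier compactification $(\ol{U}, \partial U)$ such that the induced log morphism $\ol{f}\colon (\ol{U}, \partial U) \to (\ol{X}, \partial X)$ is Kummer étale. Since all the fiber products $U^m$ are again tame étale over $X$, iterating this argument (and taking fiber products in the category of fs log schemes, which for Kummer étale maps remain log smooth) yields compatible smooth Cartier compactifications whose Čech nerve is a Kummer étale hypercover of $(\ol{X}, \partial X)$. Kummer étale covers are log-étale covers, so after applying $L_{\loget}$ (and then $L_{\bcube}$) the nerve becomes an equivalence onto $\omega^*\cH^{\Nis}(X)$, proving the claim. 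The case of $\logDMeff[\loget]$ is identical, since the graph functor $\Sm_k \hookrightarrow \Cor_k$ commutes with the relevant localizations and the log-étale topology is compatible with transfers.

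The main technical obstacle is the tame étale case: constructing compatible smooth Cartier compactifications for the entire Čech nerve such that all face/degeneracy maps are Kummer étale. A single application of (RS2) produces $(\ol{U}, \partial U)$, but the fiber products in fs log schemes $(\ol{U}, \partial U)^{\times_{(\ol{X},\partial X)}\, m}$ may fail to be smooth, and must be further resolved without destroying the Kummer étale property onto $(\ol{X}, \partial X)$. This is exactly the situation handled by the log variant of (RS2): resolving along centers contained in the preimage of the boundary preserves log étaleness over $(\ol{X}, \partial X)$, and the refinement can be performed compatibly across the simplicial levels. Once this simplicial compactification is in place, the descent reduces to $\loget$-hyperdescent for a Kummer étale hypercover, which is formal.
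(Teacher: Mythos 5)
Your proposal takes a genuinely different route from the paper's. The paper never decomposes the tame topology into Nisnevich plus finite generators up front. Instead it isolates the \emph{finite} tame case as a known input — it is cited as \cite[Lemma~6.4]{mericicrys}, and that lemma is where the Abhyankar/Kummer-\'etale compactification argument you envisage actually lives — and then proves the general case by induction on $\dim X$. Concretely, using [Stacks, Tag 07RY] it finds a closed $Z\subset X$ over whose complement $U\to X$ becomes finite, restricts to a trivializing Nisnevich cover $V\to X$ of the normal bundle of the smooth locus of $Z$, and compares the Gysin (Thom space) fiber sequences for $(V-Z',\,V')$ levelwise along the \v Cech nerve. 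Monoidality of $L_{(\bcube,\loget)}\omega^*$ lets one pull the $\P^d/\P^{d-1}$ factor out, the top row of the Gysin diagram is handled by the inductive hypothesis in lower dimension, the bottom row by the finite case, and Nisnevich descent of $\omega^*$ (\cite[Proposition~3.7]{DoosungA1invlog}) glues everything up. The Gysin argument is precisely what allows the paper to avoid any global compatible compactification of the \v Cech nerve of a non-finite cover.

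The sticking point in your plan is the one you yourself flag. Even granting a single Kummer-\'etale smooth Cartier compactification $(\ol U,\partial U)\to(\ol X,\partial X)$ via Abhyankar's lemma, the fs fiber products $(\ol U,\partial U)^{\times_{(\ol X,\partial X)}m}$ are only log smooth — their underlying schemes can have (toric) singularities, so they are not objects of $\SmlSm_k$ and one cannot yet write $\omega^*\cH^{\Nis}(U^m)\simeq\cH^{\dNis}_{\log}(\ol U^m,\partial U^m)$. You propose to fix this with a ``log variant of (RS2)'' preserving Kummer \'etaleness over the base and chosen compatibly across the whole simplicial object. No such statement appears in the paper: (RS2) as formulated concerns blow-ups of a \emph{smooth} scheme along an SNC divisor, gives no control on the Kummer-\'etale property of the resolved object over $(\ol X,\partial X)$, and is not simplicially functorial. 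This is not a technicality one can defer; it is the hard content of the finite case, and your plan asserts it rather than proves it.

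A smaller but real gap is the reduction to generators. You assert without reference that the tame topology is generated by Nisnevich covers together with finite tame \'etale covers and that \v Cech descent for a generating class suffices. The latter is formally fine here (the \v Cech-nerve maps of generators generate the strongly saturated class of tame local equivalences, and $L_{(\bcube,\loget)}\omega^*$ preserves colimits), and the former is plausible by the usual Nisnevich-local structure of \'etale morphisms — but it is not quoted from \cite{HS2020}, and the paper does \emph{not} rely on it. Instead its dimension induction and stratification argument does the work of that reduction geometrically. So your proposal and the paper's proof are almost exactly complementary: you want to assume the reduction and redo the finite case, whereas the paper outsources the finite case and carries out the reduction.
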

\begin{proof}
    We only do the proof for $\SH$, the proof of $\DM$ is similar (in fact easier). If $U\to X$ is finite, then this is \cite[Lemma 5.4]{mericicrys}. In general, we proceed by induction on the dimension of $X$. If the dimension is zero, then $U\to X$ is finite and we conclude. Since $U$ and $X$ are smooth separated of finite type over $\Spec(k)$, $U\to X$ is locally of finite presentation and separated. Since $U\to X$ is \'etale, it is flat and locally quasi-finite (see \cite[\href{https://stacks.math.columbia.edu/tag/03WS}{Tag 03WS}]{stacks-project}), and since $U$ is quasi-compact it has universally bounded fibres by \cite[\href{https://stacks.math.columbia.edu/tag/03JA}{Tag 03JA}]{stacks-project}, so by \cite[\href{https://stacks.math.columbia.edu/tag/07RY}{Tag 07RY}]{stacks-project} there is a closed subscheme $Z\subseteq X$ such that $U_{|X-Z}\to X-Z$ is finite (we consider $Z_{n-1}$ in \emph{loc.cit.}). Let $d$ be the relative codimension of $Z$ in $X$ and let $Z_0\subset Z$ be the singular locus of $Z$, so that $Z-Z_0\subseteq X-Z_0$ is a closed immersion in $\Sm_k$. Let $V\to X-Z_0$ be an trivializing open for the normal bundle of $Z-Z_0\subseteq X-Z_0$: by taking $V$ small enough, we can suppose that $(Z-Z_0)$ has pure codimension $d$ inside $V$.
    Let $V':= V\times_X(Z-Z_0)$ and $U' = U\times_X V\times_X(Z-Z_0)$. Then since $U^m\times_X V\to V$ and $V\to X$ are all flat, the normal bundle commute with the pullback so the Gysin sequences imply that the columns of the following diagram are fiber sequences:\[
    \begin{tikzcd}
\colim_m L_{(\bcube,\loget)}\omega^*\cH^{\Nis}((U^m)_{|V-Z_{|V}})\ar[r]\ar[d] &L_{(\bcube,\loget)}\omega^*\cH^{\Nis}(V-Z_{|V})\ar[d]\\
        \colim_m L_{(\bcube,\loget)}\omega^*\cH^{\Nis}((U^m)_{|V})\ar[r]\ar[d] &L_{(\bcube,\loget)}\omega^*\cH^{\Nis}(V)\ar[d]\\
        (\colim_m L_{(\bcube,\loget)}\omega^*\cH^{\Nis}((\P^{d}/\P^{d-1})\times (U')^m)\ar[r]&L_{(\bcube,\loget)}\omega^*\cH^{\Nis}(\P^{d}/\P^{d-1})\times V').
    \end{tikzcd}
    \]
Since both $\omega^*$ and $L_{(\bcube,\loget)}$ are monoidal, we can take $\P^{d}/\P^{d-1}$ outside and by induction on dimension the bottom horizontal map is an equivalence, and the top horizontal map is an equivalence again by \cite[Lemma 6.4]{mericicrys} since $U_{|V-Z_{|V}}\to V-Z_{|V}$ is finite, so the middle horizontal map is an equivalence. We can then take $V\to X$ a trivializing cover and apply the diagram above to $V^r:= V\times_X V\times_X\ldots \times_X V$, so by Nisnevich descent of $\omega^*$ (see \cite[Proposition 3.7]{DoosungA1invlog}) we deduce that\[
\begin{aligned}
&\colim_{m} L_{(\bcube,\loget)}\omega^*\cH^{\Nis}((U^m)_{|X-Z_0})\simeq \colim_r \colim_{m} L_{(\bcube,\loget)}\omega^*\cH^{\Nis}((U ^m)_{|V^r})\\
&\simeq \colim_r L_{(\bcube,\loget)}\omega^*\cH^{\Nis}(V^r) \simeq L_{(\bcube,\loget)}\omega^*\cH^{\Nis}(X-Z_0).
\end{aligned}
\]  
By taking now $Z_n$ as the singular locus of $Z_{n-1}$, we get a chain of strict closed subschemes such that \[
\colim L_{(\bcube,\loget)}\omega^*\cH^{\Nis}((U^\bullet)_{|X-Z_n})\simeq L_{(\bcube,\loget)}\omega^*\cH^{\Nis}(X-Z_n)\textrm{ for all }n,
\] 
so we conclude since $X$ has finite Krull dimension. 
\end{proof}
For the rest of the section, we assume that $k$ satisfies resolutions of singularities as in \ref{nota:RS}. We are now ready to prove our main theorem:
\begin{thm}\label{thm:comparison}
    The adjunction\[
    \begin{tikzcd}
    \SH^{S^1}(k) \ar[r,"\omega^*", shift left = 2] &\logSH^{S^1}(k)\ar[l,"\omega_*"]
    \end{tikzcd}
    \]
    induces an adjunction\[
    \begin{tikzcd}
    \SH^{S^1}_t(k) \ar[r,"\omega^*_t", shift left = 2] &\logSH^{S^1}_{\loget}\ar[l,"\omega_*^t"](k)
    \end{tikzcd}
    \]
fitting in commutative diagrams:\[
    \begin{tikzcd}
    \SH^{S^1}(k) \ar[r,"\omega^*"]\ar[d,"L_{(\A^1,t)}"] &\logSH^{S^1}(k)\ar[d,"L_{(\bcube,\loget)}"] && 
    \SH^{S^1}(k) &\logSH^{S^1}(k)\ar[l,"\omega_*"]\\
     \SH^{S^1}_t(k) \ar[r,"\omega^*_t"] &\logSH^{S^1}_{\loget}(k) &&
    \SH^{S^1}_t(k) \ar[u,"i_{(\A^1,t)}"]&\logSH^{S^1}_{\loget}\ar[l,"\omega_*^t"](k)\ar[u,"i_{(\bcube,\loget)}"]
    \end{tikzcd}
\]
and similarly for $\DMeff$. The functor $\omega^*_t$ is monoidal.
\begin{proof}
    The functor $L_{(\bcube,\loget)}\omega^*$ satisfies $\A^1$-invariance by construction and tame descent by Theorem \ref{lem:tame-descent} above, therefore it factors through $L_{(\A^1,t)}$ inducing $\omega^*_t$ that fits in the first commutative square. Since $L_{(\bcube,\loget)}\omega^*$ is a left adjoint, it preserves all colimits. Since both $i_{(\A^1,t)}$ and $i_{(\bcube,\loget)}$ are fully faithful, we have that for $\{M_i\}_{i\in I}$ a system in $\DMeff[t](k,\Z)$:
    \begin{align*}
            &\colim_{i\in I} \omega^*_t M_i \simeq \colim_{i\in I} \omega^*_tL_{(\A^1,t))}i_{(\A^1,t)}M_i \simeq  
            \colim L_{(\bcube,\loget))}\omega^*i_{(\A^1,t)} M_i \simeq 
            L_{(\bcube,\loget))}\omega^* \colim i_{(\A^1,t)}M_i \\
            &\simeq \omega^*_t L_{(\A^1,t))}\colim i_{(\A^1,t)} M_i \simeq \omega^*_t \colim_{i\in I} M_i,
    \end{align*}
    where the last equivalence follows from the fact that $L_{(\A^1,t)}\dashv i_{(\A^1,t)}$ is a localization, so by the adjoint functor theorem there exists a right adjoint $\omega_*^t$, which fits by adjunction in the right square. The monoidality of $\omega^*_t$ then follows from the monoidality of $L_{(\bcube, \loget)}$ and $L_{(\A^1, t)}$, which holds by definition of the monoidal structure on tame motives, and the monoidality of $\omega^*$, which holds by \cite[Theorem 4.4(4)]{DoosungA1invlog}.
\end{proof}
\end{thm}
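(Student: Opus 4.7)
The plan is to reduce Theorem \ref{thm:comparison} to essentially formal manipulations once Lemma \ref{lem:tame-descent} is in hand. First, I would argue that the composite $L_{(\bcube,\loget)}\omega^*\colon \SH^{S^1}(k)\to \logSH^{S^1}_{\loget}(k)$ descends to a functor out of $\SH^{S^1}_t(k)$. Since $\omega^*$ carries $\A^1$ to $\bcube$, this composite is already $\A^1$-invariant; by Lemma \ref{lem:tame-descent} it also sends the \v Cech nerve of any tame cover $U\to X$ in $\Sm_k$ to a colimit diagram. Because the representables $\cH^{\Nis}(X)$ generate $\SH^{S^1}(k)$ under colimits and the target is stable, the universal property of the $(\A^1,t)$-localization produces the factorization $\omega^*_t$, which by construction makes the left square commute. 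The argument for $\DMeff$ is identical, using the transfer version of Lemma \ref{lem:tame-descent}.

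Next, I would construct the right adjoint $\omega^t_*$ by invoking the adjoint functor theorem, for which it suffices to verify that $\omega^*_t$ preserves all colimits. Here I would exploit that both inclusions $i_{(\A^1,t)}$ and $i_{(\bcube,\loget)}$ preserve colimits (filtered colimits by the cited results, finite colimits because the categories are stable), so for any diagram $\{M_i\}$ in $\SH^{S^1}_t(k)$ one has
\[
\omega^*_t\,\mathrm{colim}\,M_i \;\simeq\; L_{(\bcube,\loget)}\omega^*\,i_{(\A^1,t)}\,\mathrm{colim}\,M_i \;\simeq\; \mathrm{colim}\,L_{(\bcube,\loget)}\omega^*\,i_{(\A^1,t)} M_i \;\simeq\; \mathrm{colim}\,\omega^*_t M_i,
\]
using that $L_{(\bcube,\loget)}\omega^*$ is a left adjoint. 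The right commutative square is then obtained formally by taking right adjoints throughout the left square.

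Finally, the monoidality of $\omega^*_t$ will follow because all three ingredients are monoidal: the localization $L_{(\A^1,t)}$ by definition of the symmetric monoidal structure on tame motives, $L_{(\bcube,\loget)}$ by construction of $\logSH^{S^1}_{\loget}(k)$, and $\omega^*$ by \cite[Theorem 4.4(4)]{DoosungA1invlog}; since these are precisely the functors used to build $\omega^*_t$, monoidality transfers.

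The only genuinely non-formal input is the tame descent statement of Lemma \ref{lem:tame-descent}, so in the course of writing the proof I would not expect any further obstacle; the hard work has already been done in establishing that descent result (together with the fact that $\omega^*$ itself satisfies Nisnevich descent, used inside the inductive argument of that lemma).
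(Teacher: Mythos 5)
Your proposal is correct and follows essentially the same route as the paper: factor $L_{(\bcube,\loget)}\omega^*$ through the tame localization using Lemma \ref{lem:tame-descent}, verify that the resulting $\omega^*_t$ preserves colimits via the colimit-preserving fully faithful right adjoints $i_{(\A^1,t)}$ and $i_{(\bcube,\loget)}$, invoke the adjoint functor theorem for $\omega^t_*$, and transfer monoidality from the three constituent monoidal functors. The only cosmetic difference is that you remark on $\A^1$-invariance of the composite, which is already built in since $\SH^{S^1}(k)$ and $\logSH^{S^1}(k)$ are already $\A^1$- and $\bcube$-local respectively; the genuinely new ingredient inverted by $L_{(\A^1,t)}$ is tame descent, which Lemma \ref{lem:tame-descent} supplies, exactly as you say.
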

\begin{cor}
    The integral $p$-adic cohomology of \cite{mericicrys} factors through $\DMeff[t](k)$, inducing a realization\[
    R\Gamma_p^t\colon \DMeff[t](k)\to \cD(R(k))\quad X\mapsto \lim_m R\Gamma((\ol{X},\partial X), W_m\Lambda^\bullet),
    \]
    where $W_m\Lambda^\bullet$ is the logarithmic de Rham--Witt complex of Hyodo--Kato (see \cite{mokrane} or \cite{matsuue}) and $R(k)$ is the Raynaud ring of Ekhedal \cite{EkedahlII}.
    \begin{proof}
        $R\Gamma_p$ is defined as:\[
        \lim_m \Map_{\logDMeff[\loget](k,\Z)}(L_{(\bcube,\loget)}\omega^*(-),W_m\Lambda^\bullet)\colon \DMeff(k)\to \cD(R(k))
        \]
        and so by Theorem \ref{thm:comparison} this agrees with\[
        \DMeff(k)\xrightarrow{L_{(\A^1,t)}}  \DMeff[t](k)\xrightarrow{\lim_m R\Gamma(\omega^*_t(-), W_m\Lambda^\bullet)} \cD(R(k)).
        \]
    \end{proof}
\end{cor}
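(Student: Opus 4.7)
The strategy is a direct consequence of Theorem \ref{thm:comparison}: I expect the integral $p$-adic cohomology constructed in \cite{mericicrys} to already be invariant under tame-local equivalences, because it is built from the functor $L_{(\bcube,\loget)}\omega^*$, which factors through the tame localization.

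First, I would recall the construction from \cite{mericicrys}. The $p$-adic realization $R\Gamma_p\colon \DMeff(k)\to \cD(R(k))$ is obtained from the fact that the de Rham--Witt complex $W_m\Lambda^\bullet$, regarded as an object of $\logDMeff[\loget](k,\Z)$, is representable in that category; concretely,
\[
R\Gamma_p(-) \;=\; \lim_m \Map_{\logDMeff[\loget](k,\Z)}\bigl(L_{(\bcube,\loget)}\omega^*(-),\, W_m\Lambda^\bullet\bigr),
\]
and on the motive of a smooth $X$ with smooth log compactification $(\ol{X},\partial X)$ this evaluates to $\lim_m R\Gamma((\ol{X},\partial X), W_m\Lambda^\bullet)$ by the description of $\omega^*$ on $\cM^{\Nis}(X)$ recalled before Lemma \ref{lem:tame-descent}.

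Next, I would invoke the left-hand commutative square of Theorem \ref{thm:comparison} for $\DMeff$, which yields a natural equivalence $L_{(\bcube,\loget)}\omega^* \simeq \omega^*_t \circ L_{(\A^1,t)}$ as functors $\DMeff(k) \to \logDMeff[\loget](k)$. Substituting gives
\[
R\Gamma_p(-) \simeq \lim_m \Map_{\logDMeff[\loget](k,\Z)}\bigl(\omega^*_t L_{(\A^1,t)}(-),\, W_m\Lambda^\bullet\bigr),
\]
and the right-hand side visibly factors through $L_{(\A^1,t)}\colon \DMeff(k)\to \DMeff[t](k)$. Defining $R\Gamma_p^t$ by the analogous formula with $L_{(\A^1,t)}(-)$ removed produces the desired realization; the compatibility $R\Gamma_p = R\Gamma_p^t \circ L_{(\A^1,t)}$ and the explicit formula on $\cM^t(X) = L_{(\A^1,t)}\cM^{\Nis}(X)$ follow.

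Since this is a formal consequence of the comparison theorem, I do not anticipate any substantive obstacle: the only genuine input is Theorem \ref{thm:comparison}, together with the representability of $W_m\Lambda^\bullet$ in $\logDMeff[\loget](k)$ imported from \cite{mericicrys}, and the rest is a rewriting of the mapping-space formula through the factorization of $L_{(\bcube,\loget)}\omega^*$.
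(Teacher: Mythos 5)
Your proposal is correct and follows the same route as the paper: both use the commutative square of Theorem \ref{thm:comparison} to rewrite $L_{(\bcube,\loget)}\omega^*$ as $\omega^*_t\circ L_{(\A^1,t)}$ inside the defining formula $\lim_m \Map_{\logDMeff[\loget](k,\Z)}(L_{(\bcube,\loget)}\omega^*(-),W_m\Lambda^\bullet)$, so that the realization visibly factors through $\DMeff[t](k)$. The extra remarks you add (representability of $W_m\Lambda^\bullet$ and the evaluation on smooth log compactifications) are the same inputs the paper implicitly imports from \cite{mericicrys}.
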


\begin{cor}
Let $F$ be a strictly $(\bcube,\loget)$-invariant sheaf of abelian groups, so that $F[0]\in \logSH^{S^1}_{\loget}$. Let $G\in \Sh_{t}(\Sm_k,\Spt)$ be the underlying sheaf of spectra of $\omega_*^t F[0]$. Then the tame sheaf $\pi_0G$ is strongly $\A^1$-invariant, \emph{i.e.} for all $X\in \Sm_k$\[
\pi_0G(\A^1_X)\simeq \pi_0G(X)\textrm{ and }H^1_t(\A^1_X,\pi_0G)\simeq H^1_t(X,\pi_0G).
\] 
\begin{proof}
    Since $\omega_*$ is left $t$-exact with respect to the homotopy $t$ structures of $\logSH^{S^1}(k)$ and $\SH^{S^1}(k)$ and the inclusion $R\epsilon_*\colon\Sh_t(\Sm_k,\Spt)\to \Sh_{\Nis}(\Sm_k,\Spt)$ is fully faithful and left $t$-exact for the usual $t$-structure induced by the Postnikov $t$-structure on $\Spt$, we deduce that $\tau_{\geq 1}G = 0$, so we have a fiber sequence in $\Spt$:\[
    \pi_0G(Y)\to G(Y)\to \tau_{< 0}G(Y),
    \] 
    which implies that $\pi_0G(Y) = \pi_0(G(Y))$. Since $G$ is $(\A^1,t)$-local, we conclude that $\pi_0(G(\A^1_X)) = \pi_0(G(X))$. Finally, consider the hypercohomology speactral sequence, which is functorial in $Y$:\[
    H^p_t(Y,\pi_{-q}G)\Rightarrow H^{p+q}_t(X,G)
    \]
    whose five term exact sequence gives an injective map\[
    H^1_t(Y,\pi_0G)\hookrightarrow H^1_t(Y,G).
    \]
    Since $G$ is $(\A^1,t)$-local, we have that $H^1_t(\A^1_X,G) = H^1_t(X,G)$, so the map $H^1_t(\A^1_X,\pi_0G) \to H^1_t(X,\pi_0G)$ induced by the zero section is injecitve. On the other hand, this map has a retraction induced by the projection $\A^1_X\to X$, so it is also surjective.
\end{proof}
\end{cor}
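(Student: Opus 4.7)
The plan is to exploit two structural features of the sheaf of spectra $G$: that it is $(\A^1,t)$-local on $\Sm_k$ (because $\omega_*^t$ lands in $\SH^{S^1}_t(k)$ by Theorem \ref{thm:comparison}), and that it is connective in the sense that its positive homotopy sheaves vanish. The two invariance statements for $\pi_0 G$ will then fall out of a low-degree analysis of the hypercohomology spectral sequence, combined with the retraction coming from the projection $\A^1_X\to X$.

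For the connectivity, one argues that $F[0]$ lies in the heart of the homotopy $t$-structure of $\logSH^{S^1}_{\loget}(k)$, that the right adjoint $\omega_*$ between unlocalized logarithmic and classical $S^1$-spectra is left $t$-exact with respect to the homotopy $t$-structures, and that the fully faithful change-of-topology $R\epsilon_*\colon \Sh_t(\Sm_k,\Spt)\to \Sh_{\Nis}(\Sm_k,\Spt)$ is left $t$-exact for the Postnikov $t$-structure on $\Spt$. These facts together give $\tau_{\geq 1}G=0$, so that the fiber sequence
\[
\pi_0 G \longrightarrow G \longrightarrow \tau_{\leq -1} G
\]
evaluated at any $Y\in \Sm_k$ shows $\pi_0 G(Y) = \pi_0(G(Y))$.

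The first equality is then immediate from $(\A^1,t)$-locality of $G$:
\[
\pi_0 G(\A^1_X) = \pi_0(G(\A^1_X)) = \pi_0(G(X)) = \pi_0 G(X).
\]
For the $H^1$-statement I would invoke the hypercohomology spectral sequence
\[
E_2^{p,q} = H^p_t(Y/k,\pi_{-q}G)\Rightarrow H^{p+q}_t(Y/k,G),
\]
natural in $Y$. Its five-term exact sequence, together with the vanishing $\pi_q G = 0$ for $q>0$, yields a functorial injection $H^1_t(Y/k,\pi_0 G)\hookrightarrow H^1_t(Y/k,G)$. Specializing to $Y=\A^1_X$ and using $(\A^1,t)$-locality of $G$ turns the pullback along the zero section into an injection $H^1_t(\A^1_X/k,\pi_0 G)\hookrightarrow H^1_t(X/k,\pi_0 G)$; surjectivity is then forced by the retraction furnished by the projection $\A^1_X\to X$.

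The delicate step in this strategy will be the connectivity $\tau_{\geq 1}G=0$. One must verify that the $t$-exactness of $\omega_*$ survives the passage through the colocalization inclusions $i_{(\A^1,t)}$ and $i_{(\bcube,\loget)}$ supplied by Theorem \ref{thm:comparison}, and that the tame-to-Nisnevich comparison $R\epsilon_*$ on sheaves of spectra does not create new positive homotopy. Once these inputs are in place, the remainder is a formal consequence of the $(\A^1,t)$-locality of $G$ together with the existence of the retraction.
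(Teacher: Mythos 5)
Your proposal is correct and follows essentially the same route as the paper's own proof: establish $\tau_{\geq 1}G=0$ via left $t$-exactness of $\omega_*$ and $R\epsilon_*$, deduce $\pi_0G(Y)=\pi_0(G(Y))$ from the resulting truncation fiber sequence, get the first statement from $(\A^1,t)$-locality, and obtain the $H^1$-statement from the five-term exact sequence of the hypercohomology spectral sequence combined with the retraction from the projection $\A^1_X\to X$. (Your fiber sequence $\pi_0 G\to G\to \tau_{\leq -1}G$ is in fact the correct form; the paper's $\tau_{\leq 0}G$ is a typo.)
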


\begin{example}\label{ex:drW}
    Consider the \'etale sheaves $\nu_m(n)$: they fit in an left exact sequence in $\mathbf{RSC}_{\Nis}$:\[
    0\to \nu_m(n)\to W_m\Omega^n\xrightarrow{F-1}W_m\Lambda^n/dV^{m-1}\Omega^{n-1}
    \] 
    so by \cite{shujilog} and \cite[Theorem 4.4]{mericicrys} they fit in a left exact sequence of log \'etale sheaves:\[
        0\to \widetilde{\nu_m(n)}\to W_m\Lambda^n \xrightarrow{F-1}W_m\Lambda^n/dV^{m-1}\Lambda^{n-1}
    \]
    where $\widetilde{\nu_m(n)}$ is as in \eqref{eq:nu-tilde}.
    By \cite[Proposition 2.13]{lorenzon}, the last map is surjective in the log \'etale topology. Moreover, $W_m\Lambda^n$ and $W_m\Lambda^n/dV^{m-1}\Lambda^{n-1}$ are both  strictly $(\bcube,\loget)$-invariant by \cite[Theorem 4.2]{mericicrys}, hence $\widetilde{\nu_m(n)}$ are strictly $(\bcube,\loget)$-invariant, which implies that $L_{\loget}\omega^*\nu_m(n)\simeq \widetilde{\nu_m(n)}[0]$ is $(\bcube,\loget)$-local. This implies that $\pi_0\omega_*^tL_{\loget}\omega^*\nu_m(n)$ is strongly $(\A^1,t)$-invariant, but for all $X\in \Sm_k$ with smooth log compactification $(\ol{X},\partial X)$ as in Remark \ref{rmk:comp} we have that:
    \[
        \pi_0\omega_*^tL_{\loget}\omega^*\nu_m(n)(X) = \widetilde{\nu_m(n)}(X,\partial X) = \nu_m(n)(X),
    \]
    so we deduce that the sheaves $\nu_m(n)$ are strongly $(\A^1,t)$-invariant.
\end{example}

\section{\texorpdfstring{$\A^1$}{A1}-invariance of higher tame cohomology}

In this section, we use the result of \cite{amine} to prove the $\A^1$-invariance of higher tame cohomology. Notice that the assumption on resolutions of singularities on $k$ are still needed. 

\begin{lemma}\label{lem:pbf}
For all $m,n$, there is an equivalence in $\cD(\Sh_t^{\rm tr}(\Sm_k,\Z))$:\[
\nu_m(n)[0]\oplus \nu_m(n-1)[-1]\simeq R\uHom_{\Sh_{t}^{\tr}}(\Z_{\tr}(\P^1),\nu_m(n)).\] 
\end{lemma}
\begin{proof}
    Recall that $\nu_m(n)$ are $\A^1$-invariant Nisnevich sheaves with transfers and $(\nu_m(n))_{-1}\cong \nu_m(n-1)$, so the projective bundle formula in $\DMeff(k,\Z)$ (see \cite[Theorem 15.1 and Proposition 24.8]{MVW}) gives by adjunction maps in $\cD(\Sh_{\Nis}^{\rm tr}(\Sm_k,\Z))$\[
    (\nu_m(n)[0]\oplus \nu_m(n-1)[-1])\otimes \Z_{\rm tr}(\P^1) \to \nu_m(n).
    \]
    Since the tame sheafification is monoidal, again by adjunction this gives a map in $\cD(\Sh_{t}^{\rm tr}(k,\Z))$\[
     \nu_m(n)[0]\oplus \nu_m(n-1)[-1] \to R\uHom_{\Sh_t^{\rm tr}}(\Z_{\tr}(\P^1),\nu_m(n)).
    \]
    Let $X\in \Sm_k$: we show that the induced map\[
    R\Gamma(X,\nu_m(n))\oplus R\Gamma(X,\nu_m(n-1))[-1]\to R\Gamma(\P^1_X,\nu_m(n))
    \] 
    is an equivalence. Let $X\in \Sm_k$ and let $\ol{X}$ smooth and proper with $j\colon X\hookrightarrow \ol{X}$ open and $\ol{X}-j(X)$ supported on a simple normal crossing divisor $D=D_1+\ldots D_t$. Then we proceed by double induction on $\dim(X)$ and $t$. If $\dim(X)=0$ or $t=0$, then $X$ is proper so the map above is an equivalence by \cite[Proposition 8.2]{HS2021} and the projective bundle formula for the \'etale cohomology of $\nu_m(n)$ by \cite{Gros1985}. In general, let $X':= \ol{X}-(D_1\cup \ldots D_{t-1})$ and $D':= D_t-(D_1\cup \ldots D_{t-1})$: by \cite[Theorem 1.3.1.]{amine} we have a map of long exact sequences\[
    \begin{tikzcd}
    H^{q-2}_t(D',\nu_m(n-2))\oplus H^{q-1}_t(D',\nu_m(n-1))\ar[r,"(1)"]\ar[d]&H^{q-1}_t(\P^1_{D'},\nu_m(n-1))\ar[d]\\
        H^{q-1}_t(X',\nu_m(n-1))\oplus H^{q}_t(X',\nu_m(n)) \ar[r,"(2)"]\ar[d]&H^q_t(\P^1_{X'},\nu_m(n))\ar[d]\\
        H^{q-1}_t(X,\nu_m(n-1))\oplus H^q_t(X,\nu_m(n))\ar[r,"(3)"]\ar[d]&H^q_t(\P^1_X,\nu_m(n))\ar[d]\\
        H^{q-1}_t(D',\nu_m(n-2))\oplus H^{q}_t(D',\nu_m(n-1))\ar[r,"(4)"]\ar[d]&H^{q}_t(\P^1_{D'},\nu_m(n-1))\ar[d]\\
        H^{q}_t(X',\nu_m(n-1))\oplus H^{q+1}_t(X',\nu_m(n)) \ar[r,"(5)"]&H^{q+1}_t(\P^1_{X'},\nu_m(n))
    \end{tikzcd}
    \]
    By induction hypotheses the maps $(1),(2),(4),(5)$ are isomorphisms, so $(3)$ is also an isomorphism.
\end{proof}
\begin{cor}\label{cor:pbf-tamely-hens}
    Let $X$ be the tame henselization of a smooth scheme over $k$ at a tame point $(\ol{x},v)$. Then $H^q_t(\P^1_X,\nu_m(n))=0$ for $q\geq 2$. 
    \begin{proof}
        We have $H^q_t(\P^1_X,\nu_m(n))\cong H^q_t(X,\nu_m(n))\oplus H^{q-1}_t(X,\nu_m(n-1))$ by Lemma \ref{lem:pbf} and \cite[Theorem 4.7]{HS2021}, and the right hand side is zero for $q\geq 2$ since $X$ is tamely henselian.
    \end{proof}
\end{cor}
\begin{thm}\label{thm:A1-inv-higher-cohomology}
    Let $X$ be the tame henselization of a smooth scheme over $k$ at a tame point $(\ol{x},v)$. Let $U\subseteq \P^1_{\ol{k}}$ be the complement of finitely many closed points. Then $H^q_t(U_X,\nu_m(n))=0$ for $q\geq 2$. 
    \begin{proof}
        Let $U=\P^1_{\ol{k}}-\{x_1\ldots x_n\}$. Since $X=\Spec(A)$ is tamely henselian with respect to $v$ trivial on $k$, we have that $\ol{k}\hookrightarrow A$, therefore $U_X \cong \P^1_{\ol{k}}-\{X_1\ldots X_n\}$, where $X_i=X\times_{\ol{k}}{x_i}$ is isomorphic to $X$, hence it is tamely henselian. Since $X$ is a cofiltered limit of smooth schemes with affine transition morphisms, \cite[Theorem 1.3.1]{amine} and \cite[Theorem 4.6]{HS2021} give a long exact sequence\[
        \ldots \to H^q_t(\P^1_X,\nu_m(n))\to H^q_t(U_X,\nu_m(n))\to \oplus H^{q}(X_i,\nu_m(n-1))\to \ldots
        \]
        Then $\oplus H^{q}_t(X_i,\nu_m(n-1))=0$ since each $X_i$ is tamely henselian and $H^q_t(\P^1_X,\nu_m(n))=0$ by Corollary \ref{cor:pbf-tamely-hens}, so the theorem follows.
    \end{proof}
\end{thm}

\section{Proof of the main theorems}

In this section, we put together the results and prove the theorems in the introduction. 
Let $k$ be a field that satisfies resolutions of singularities as in \ref{nota:RS}. Then combining Example \ref{ex:drW} and Theorem \ref{thm:A1-inv-higher-cohomology}, we conclude that for all $X$ tame henselization of a smooth scheme at a tame point $(\ol{x},v)$ and all $q\geq 0$ we have
\begin{equation}\label{eq:A1-inv}
    H^q_t(\A^1_X,\nu_m(n))\simeq H^q_t(X,\nu_m(n))
\end{equation}
We are now ready to prove Theorem \ref{thm:motivic-dRW}, \emph{i.e.} the existence of the motivic ring spectrum $H\Z/p^m$. By the usual spectral sequence argument \eqref{eq:A1-inv} implies that $\nu_m(n)$ is a strictly $\A^1$-invatiant tame sheaf with transfers, in particular the collection $\nu_m(*)[-*]$ gives rise to a graded commutative monoid in $\DMeff[t](k)$. Then to conclude it is enough to show that the maps induced by the Chern classes\[
R\Gamma_t(X,\nu_m(n))\oplus R\Gamma_t(X,\nu_m(n-1))[1]\to R\Gamma_t(\P^1_X,\nu_m(n))
\]
are equivalences, which follows from Lemma \ref{lem:pbf}. Then the spectrum $H\Z/p^m$ exists by Theorem \ref{thm:ring-spectra}
\begin{rmk}\label{rmk:GL}
In fact, by Geisser--Levine \cite[Theorem 1.1]{GL}, there is a quasi-isomorphism of complexes of Nisnevich sheaves with transfers: $\Z/p^m(n)\simeq \nu_m(n)[-n]$, where the left hand side is the motivic complex. This implies that the graded $E_{\infty}$-ring in $\DMeff[\Nis](k,\Z/p^m)$ given by the collection $\nu_m(*)[-*]$ builds up the unit of $\DM[\Nis](k,\Z/p^m)$. Since the localization functor $L_{(\A^1,t)}^{st}\colon \DM[\Nis](k,\Z/p^m)\to \DM[t](k,\Z/p^m)$ is monoidal, it preserves the unit: this implies that the ring spectrum $H\Z/p^m$ is the unit of $\DM[t](k,\Z/p^m)$, since for all $m,n$ we have that $L_{(\A^1,t)}\nu_m(n) = L_{t}\nu_m(n)$.
\end{rmk}
As observed in the introduction, Theorem \ref{thm:main-intro} is now a mere consequence of Theorem \ref{thm:mot-intro} and the motivic properties of $\DM$ (see \eqref{item:1} and \eqref{item:2}).
Finally, we prove Theorem \ref{thm:comparison}, which we restate for the sake of the reader:
\begin{thm}
Let $k$ be a perfect field of characteristic $p$ that satisfies resolutions of singularities as in \ref{nota:RS}. For all $X\in \Sm_k$ with smooth log compactification $(\ol{X},\partial X)$ as in Remark \ref{rmk:comp}, we have that\[
R\Gamma_t(X,\nu_m(n))\simeq R\Gamma_{\loget}((\ol{X},\partial X),\widetilde{\nu_m(n)})
\]
\begin{proof}    
The canonical map $\nu_m(n)[0]\to \omega_*^t\omega^*_t\nu_m(n)[0]$ induces a map
\begin{align*}
R\Gamma_t(X,\nu_m(n))\to &\Map_{\DM[t]}(\cM^t(X), \omega_*^t\omega^*_t\nu_m(n)[0])\simeq \\
&\Map_{\logDM[\loget]}(\cM^{\loget}(\ol{X},\partial X),L_{(\bcube,\loget)}\omega^*\nu_m(n)[0])
\end{align*}
functorial in $X$ and $(\ol{X},\partial X)$, and since $L_{\loget}\omega^*\nu_m(n)[0]\simeq \widetilde{\nu_m(n)}[0]$ is already $(\bcube,\loget)$-local, we have that $L_{(\bcube,\loget)}\omega^*\nu_m(n)[0]\simeq \widetilde{\nu_m(n)}[0]$, therefore the map above induces a map \[R\Gamma_t(X,\nu_m(n))\to R\Gamma_{\loget}((\ol{X},\partial X),\widetilde{\nu_m(n)}).\] We will show that this map is an equivalence. 
Let $|\partial X| = D_1 + \ldots D_r$: we proceed by double induction on $\dim(X)$ and $r$. If $r=0$, then $X=\ol{X}$ is proper, so\[
H^q_t(X,\nu_m(n))\simeq H^q_{\et}(X,\nu_m(n)) = H^q_{\loget}((X,\triv),\widetilde{\nu_m(n)}).
\]
If $\dim(X)=0$, then $\partial X=0$ so it follows from the case above. For $\dim(X)>0$ and $r(X)>0$, let $\partial X'$ be the log structure supported on $D_1+\ldots D_{r-1}$ and $X':=\ol{X}-|\partial X'|$: then Morel--Voevodsky purity of \ref{item:2} in \ref{rmk:pbf-gysin} gives the fiber sequence \[
\cM^t(X)\to \cM(X')\to \cM^t(D_1\cap X')\otimes \cM^t(\P^1,i_0),
\]
so using the computation of the cohomology of $\P^1$ in Lemma \ref{lem:pbf} and \cite[Corollary 4.5]{mericicrys} gives for all $q$ a diagram whose columns are long exact sequences:\[
\begin{tikzcd}
    H^{q-1}_t(D_1\cap X',\nu_m(n-1))\ar[r,"(1)"]\ar[d]& H^{q-1}_{\loget}((D_1\partial X_{D_1}),\widetilde{\nu_m(n-1)})\ar[d]\\
    H^q_t(X',\nu_m(n))\ar[r,"(2)"]\ar[d]&H^q_{\loget}((X,\partial X'),\widetilde{\nu_m(n)})\ar[d] \\
    H^q_t(X,\nu_m(n))\ar[r,"(3)"]\ar[d]& H^q_{\loget}((X,\partial X),\widetilde{\nu_m(n)})\ar[d]\\
    H^q_t(D_1\cap X',\nu_m(n-1))\ar[r,"(4)"]\ar[d]& H^q_{\loget}((D_1\partial X_{D_1}),\widetilde{\nu_m(n-1)})\ar[d]\\
    H^{q+1}_t(X',\nu_m(n))\ar[r,"(5)"] &H^{q+1}_{\loget}((X,\partial X'),\widetilde{\nu_m(n)})\\
    &&&&
\end{tikzcd}
\]
By induction on the dimension, $(1)$ and $(4)$ are isomorphisms, and by induction on $r$ $(2)$ and $(5)$ are isomorphisms, so $(3)$ is an isomorphism, concluding the proof.
\end{proof}
\end{thm}

\bibliographystyle{alpha}
\bibliography{bibMerici}

\end{document}